\documentclass[12pt]{amsart}
\usepackage[all]{xy}
\usepackage{a4wide}
\usepackage{mathtools}
\usepackage{amssymb}
\usepackage{amsthm}
\usepackage{amsmath}
\usepackage{mathrsfs}
\usepackage{mathabx}
\usepackage{theoremref}
\usepackage{scalerel}
\usepackage{amscd}
\usepackage{verbatim}
\usepackage[all]{xy}
\usepackage{hyperref}

\usepackage{comment}

\usepackage[textsize=tiny]{todonotes}

\usepackage{enumitem}

\numberwithin{equation}{section}

\DeclareMathOperator{\GL}{GL}

\DeclareMathOperator{\G}{\Gamma}

\DeclareMathOperator{\SL}{SL}
\DeclareMathOperator{\Mp}{Mp}

\DeclareMathOperator{\N}{\mathbb{N}}
\DeclareMathOperator{\Z}{\mathbb{Z}}
\DeclareMathOperator{\R}{\mathbb{R}}

\renewcommand{\H}{\mathbb{H}}

\DeclareMathOperator{\mero}{mero}

\DeclareMathOperator{\sgn}{sgn}

\newcommand{\reg}{\mathrm{reg}}

\newcommand{\bs}{\ensuremath{\backslash}}

\makeatletter
\newcommand*\defbb[1]{
  \expandafter\newcommand\csname I#1\endcsname{\mathbb{#1}}}
\newcommand*\defbbs[1]{
  \@for\@i:=#1\do{\expandafter\defbb\expandafter{\@i}}}
\makeatother
\defbbs{A,B,C,D,E,F,G,H,I,K,L,M,N,O,P,Q,R,S,T,U,V,W,X,Y,Z} 

\makeatletter
\newcommand*\deffrak[1]{
  \expandafter\newcommand\csname frak#1\endcsname{\mathfrak{#1}}}
\newcommand*\deffraks[1]{
  \@for\@i:=#1\do{\expandafter\deffrak\expandafter{\@i}}}
\makeatother
\deffraks{a,b,c,d,e,f,g,h,i,j,k,l,m,n,o,p,q,r,s,t,u,v,w,x,y,z,A,B,C,D,E,F,G,H,I,J,K,L,M,N,O,P,Q,R,S,T,U,V,W,X,Y,Z}

\makeatletter
\newcommand*\defcal[1]{
  \expandafter\newcommand\csname cal#1\endcsname{\mathcal{#1}}}
\newcommand*\defcals[1]{
  \@for\@i:=#1\do{\expandafter\defcal\expandafter{\@i}}}
\makeatother
\defcals{A,B,C,D,E,F,G,H,I,J,K,L,M,N,O,P,Q,R,S,T,U,V,W,X,Y,Z}

  \newtheorem{Theorem}{Theorem}[section]
  \newtheorem{Lemma}[Theorem]{Lemma}
  \newtheorem{Proposition}[Theorem]{Proposition} 
  \newtheorem{Corollary}[Theorem]{Corollary}
  
  \theoremstyle{definition}

  \newtheorem{Remark}[Theorem]{Remark}

\date{\today}

\title{Magnetic orthogonal modular forms}

\author{Claudia Alfes}

\address{Universität Bielefeld, Fakultät für Mathematik, Postfach 100 131, 33501 Bielefeld, Germany}
\email{alfes@math.uni-bielefeld.de}

\author{Paul Kiefer}

\address{Department of Mathematics, University of Antwerp, BE-2000 Antwerp, Belgium.}
\email{Paul.Kiefer@uantwerpen.be}

\thanks{The research of the authors is funded by the Deutsche Forschungsgemeinschaft (DFG, German Research Foundation) -- SFB-TRR 358/1 2023 -- 491392403. The second author was funded by the Research Foundation - Flanders (FWO) within the framework of the Odysseus program project number G0D9323N}

\begin{document}

\begin{abstract}
    In this note we show that certain meromorphic orthogonal modular forms are magnetic, i.e.\ their Fourier coefficients satisfy special divisibility criteria. These meromorphic orthogonal modular forms are counterparts to the orthogonal cusp forms considered by Oda. We show that the seminal work of Borcherds implies the magneticity of these forms.

\medskip

\noindent\textsc{R\'esum\'e.}
Dans cette note, nous montrons que certaines formes modulaires orthogonales méromorphes sont magnétiques, c’est-à-dire que leurs coefficients de Fourier satisfont à des critères particuliers de divisibilité. Ces formes modulaires orthogonales méromorphes sont les analogues des formes cuspidales orthogonales considérées par Oda. Nous montrons que les travaux fondateurs de Borcherds impliquent la magnéticité de ces formes.

\end{abstract}
\maketitle


\section{Introduction}
The coefficients of the modular $j$-invariant $a_j(n)$ satisfy the following divisibility property
\[
2^{3a+8}3^{2b+5}5^{c+1}7^d\,  |\, a_j(2^a3^b5^c7^d n)
\]
for all $a,b,c,d\in\IN_0$ and $n\in\IN$. This divisibility phenomenon also holds for higher weight weakly holomorphic modular forms, that is, modular forms with poles of finite order at the cusps. Duke and Jenkins \cite{dukejenkins} proved that the Fourier coefficients $a(n)$ of certain weakly holomorphic modular forms of positive weight $2k$ satisfy the divisibility property $n^{k-1}|a(n)$ for all $n\in\IN$. 

In \cite{broadhurstzudilin} Broadhurst and Zudilin observed that the $n$-th Fourier coefficient of a certain meromorphic modular form of weight $4$ on the congruence subgroup $\G_0(8)$ that arose from a physics problem in \cite{ausserlechner} is divisible by $n$. Meromorphic modular forms are modular forms that are allowed to have finite order poles on the upper half-plane $\IH$. Inspired by the physics background, meromorphic modular forms of positive weight $2k$ whose Fourier coefficients $c(n)$ are divisible by $n^{k-1}$ for all $n$ are referred to as \emph{magnetic modular forms}\footnote{Here, we stick to the definition of magneticity used in \cite{loebrichschwagenscheidt}. Note that there is a more general notion of being \textit{magnetic of depth $d$} which is for example introduced in \cite{magneticconjectures}.}.

The magneticity of the form of Broadhurst and Zudilin was proven by Li and Neururer in \cite{lineururer} using the theory of Borcherds' regularized additive theta lift of weakly holomorphic modular forms and Hecke theory. Their method gave rise to many other examples of magnetic modular forms, including the weight $4$ form $E_4\Delta/E_6^2$ and the weight $6$ form $E_6\Delta/E_4^3$ (see \cite{pasolzudilin}). Here, $E_4$ and $E_6$ denote the classical Eisenstein series of weight $4$ and $6$ respectively, and $\Delta(z) =(E_4^3-E_6^2)/1728$ is Ramanujan's weight $12$ $\Delta$-function.

In \cite{loebrichschwagenscheidt} L\"obrich and Schwagenscheidt showed that specific linear combinations of certain meromorphic modular forms associated with binary quadratic forms are magnetic. 
For an integer $d\equiv 0,1\pmod{4}$ and a fundamental discriminant $D\in\IZ$ satisfying $dD<0$ let $\mathcal{Q}_{dD}$ be the set of positive definite integral binary quadratic forms of discriminant $dD$. 
Furthermore, let $\chi_D$ be the usual genus character in $\mathcal{Q}_{dD}$ as defined in \cite{gkz}. 
For $k\in \IN$ with $k\geq 2$, L\"obrich and Schwagenscheidt considered the functions
\[
    f_{k,d,D}(z)= C_{k,d,D} \sum_{Q\in\mathcal{Q}_{dD}} \chi_D(Q) Q(z,1)^{-k},
\]
where $C_{k,d,D}$ is an explicitly given (normalisation) constant.
These were first defined by Bengoechea in \cite{Paloma}, whereas the holomorphic counterparts of these functions (corresponding to positive discriminants $dD$) were first considered by Zagier \cite{zagierdoinaganuma} as elliptic specialisations of certain Hilbert modular forms.

Even more general, the functions $f_{k,d,D}(z)$ are elliptic specialisations of a family of meromorphic orthogonal modular forms recently defined by Kiefer \cite{kiefer}. In this note, we show that these meromorphic orthogonal forms are magnetic in signature $(2,n)$ with $n$ even.

Our results follow from the theory of Borcherds' liftings \cite{borcherds}. In Theorem 14.3 of loc.\ cit.\ Borcherds gives the Fourier expansion of an orthogonal modular form that is given as a certain theta lifting of vector-valued weakly holomorphic modular forms of weight $k$. Assuming that the Fourier coefficients of the input weakly holomorphic modular forms satisfy certain divisibility properties immediately gives the magneticity of their theta liftings (see Corollary \ref{corollary:divborcherds}).

We now describe our results in more detail. 
We let $L$ be an even lattice of signature $(2, n)$, $n\in\IN$ even. We denote the corresponding quadratic form by $q$ and the corresponding bilinear form by $(\cdot,\cdot)$. 
By $L'$ we denote the dual lattice of $L$.
We choose a primitive isotropic vector $e\in L$ and let $e'\in L'$ with $(e,e')=1$. 
We can realize the hermitian symmetric domain corresponding to the orthogonal group $O(L\otimes \IR)$ by
\[
 \IH_n^\pm =\left\{Z=X+iY \in K\otimes \IC \,|\, q(Y)>0\right\},
\]
where $K=e^\perp\cap e'^\perp \cap L$. 
There is a natural action of the orthogonal group $O(L\otimes \IR)$ on $\H_n^\pm$. 
By $\IH_n$ we denote a connected component of $\IH_n^\pm$.
We write $O^+(L\times \IR)$ for the subgroup of $O(L\otimes\IR)$ of index two that preserves the component. 
Moreover, let $\G\subset O^+(L)=O(L)\cap O^+(l\otimes \IR)$ be the discriminant kernel, i.e. the subgroup that acts trivially on the cosets $L' / L$.
Let $m<0$ and $\beta \in L' / L$.
The second author \cite{kiefer} defined a family of meromorphic orthogonal modular forms of weight $\kappa>n$ generalising Oda's orthogonal variants \cite{oda} of Zagier's $f_{k,d}$'s \cite{zagierdoinaganuma} by
\[
    \omega_{\beta, m}^{\mero}(Z)=\sum_{\substack{\mu \in \beta + L \\ q(\mu) = m}} \frac{1}{\left(\mu,Z+e-q(Z)e'\right)^{\kappa}},\, Z\in\IH_n.
\]
These functions are meromorphic orthogonal modular forms with singularities of order $\kappa$ along certain algebraic cycles that vanish at the cusps. 

Let $\mathcal{C}\subset K\otimes \IR$ be the positive cone. An orthogonal modular form $f$ of weight $\kappa$ with respect to $\Gamma$ has a Fourier expansion of the following form 
\[
 f(Z) = \sum_{\lambda\in K' \cap \overline{\mathcal{C}}} a(\lambda) e^{2\pi i (\lambda,Z)}.
\]
Let $N$ be the level of $L$, let $\ell\in\IZ$ and let $\lambda_0\in K'\cap \bar{\mathcal{C}}$ be primitive.
We call a meromorphic orthogonal modular form $f$ \textit{magnetic} if the Fourier coefficients of index $\ell \lambda_0$ of $f$ are divisible by $(\ell N q(\lambda_0))^{\kappa-1}$.

Our main theorem is that the meromorphic orthogonal modular forms $\omega_{\beta, m}^{\mero}(Z)$ are magnetic.
\begin{Theorem}\label{thm:mainintro}
    Assume that the space of cusp forms of weight $k$ for the Weil representation $\rho_L$ is trivial. Then the functions $\omega_{\beta, m}^{\mero}(Z)$ are magnetic. 
\end{Theorem}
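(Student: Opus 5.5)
Here is the plan. First I realise $\omega_{\beta,m}^{\mero}$ as a Borcherds theta lift, as in Theorem 14.3 of \cite{borcherds}. Then I read off magneticity from the shape of the Fourier coefficients of that lift, which is the content of Corollary \ref{corollary:divborcherds}. Throughout, $k=1+n/2-\kappa$ is the weight of the input vector-valued form. Since $\kappa>n$ we have $k<0$, and with $n$ even $k$ is an integer.

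The first step is to construct the input. Take the weakly holomorphic vector-valued form $F_{\beta,m}$ of weight $k$ for $\rho_L$ (or its dual) whose principal part is $q^{m}(\mathfrak e_\beta+\mathfrak e_{-\beta})$, with constant term fixed suitably. By Borcherds' obstruction theorem (the Serre-duality pairing with holomorphic forms of weight $2-k$), such a form exists exactly when the pairing against cusp forms of weight $2-k$ vanishes. This is where the hypothesis on vanishing cusp forms is used: the space appearing in the obstruction must be the one assumed trivial. I expect to check that the paper's convention $k$ matches $2-k$ in this sense; if it does not, I would use a Maass–Poincaré series whose holomorphic projection obstruction vanishes by the hypothesis. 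With this $F_{\beta,m}$ I then use the additive lift of Borcherds with polynomial $(x,Z_L)^{\kappa}$ homogeneous of degree $(\kappa,0)$. Unfolding against the Poincaré series gives a sum over $\mu\in\beta+L$ with $q(\mu)=m$ of $(\mu,Z+e-q(Z)e')^{-\kappa}$, so the lift equals a constant multiple of $\omega_{\beta,m}^{\mero}$. The constant term of $F_{\beta,m}$ may produce an Eisenstein-type contribution, which must be shown to vanish or to be absent since $m<0$.

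The second step reads off the coefficients from Theorem 14.3 of \cite{borcherds}. The coefficient of $e^{2\pi i(\lambda,Z)}$ with $\lambda=\ell\lambda_0$ has the form
\[
a(\ell\lambda_0)=\sum_{d\mid \ell}d^{\kappa-1}\sum_{\delta} c\big(\ell^2 q(\lambda_0)/d^2,\delta\big)\,(\text{root of unity}),
\]
up to an explicit constant involving powers of $2\pi i$ and $(\kappa-1)!$. Magneticity then needs two facts:
\begin{enumerate}[label=(\roman*)]
\item $d^{\kappa-1}\,(\ell q(\lambda_0)N/d)^{\kappa-1}$ divides each term;
\item the input coefficients satisfy $(Nn')^{\kappa-1}\mid c(n',\delta)$, up to the normalisation constant.
\end{enumerate}
For (ii) I will use that a negative-weight weakly holomorphic form is the image under $D^{1-k}=D^{\kappa-n/2}$ of a harmonic form, or equivalently a Bol's-identity / Duke–Jenkins type argument. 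The coefficients of $F_{\beta,m}$ equal $n'^{\kappa-1}$ times those of a form of weight $2-k$, appropriately rescaled. Since $1-k=\kappa-n/2$, the exponent $\kappa-1$ must arise by combining $n'^{\,1-k}$ with further $n'^{n/2-1}$ factors coming from the Borcherds coefficient formula.

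The main obstacle is integrality: one must show the remaining cofactors are integral, so that divisibility is genuine. My first attempt is to use the rationality of coefficients of the weight-$(2-k)$ preimage, which holds because the cusp-form space is trivial: the preimage is an Eisenstein-type form with a rational expansion. Combined with the explicit normalisation built into $\omega_{\beta,m}^{\mero}$, this should make the divisibility statement of Corollary \ref{corollary:divborcherds} apply verbatim.
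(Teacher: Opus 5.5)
Your weight bookkeeping is inverted, and Step 1 fails as stated. The input of the additive lift that produces weight $\kappa$ in signature $(2,n)$ has weight $k=1-n/2+\kappa>2$. This is the weight of $\Theta_L$ in $\tau$ and the weight in Theorem~\ref{theorem:borcherds}. The weight $1+n/2-\kappa$ you use equals $2-k<0$, and pairing a form of weight $2-k\neq k$ against $\Theta_L$ gives no $\SL_2(\Z)$-invariant integrand. So you can neither lift $F_{\beta,m}$ directly nor unfold it to obtain $\omega^{\mero}_{\beta,m}$.

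Your obstruction argument is fine as far as it goes. It produces a weakly holomorphic form of weight $2-k$ with principal part $\mathfrak e_\beta(m\tau)+\mathfrak e_{-\beta}(m\tau)$, essentially the Poincar\'e series $\calF_{\beta,m,2-k}$. This form is weakly holomorphic because $\xi_{2-k}$ maps into the cusp space $S_{k,L^-}$, which is assumed trivial; that is exactly how the paper uses the hypothesis.

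What gets lifted, however, is $D^{k-1}$ of this form. The paper combines Bol's identity $D^{k-1}\calF_{\beta,m,2-k}=m^{k-1}\calF_{\beta,m,k}$ (Lemma~\ref{lemma:bol}) with the unfolding of the lift of the \emph{positive}-weight Poincar\'e series $\calF_{\beta,m,k}$, which is a Laplace-transform computation (Theorem~\ref{theorem:liftpc}). Your claim that a negative-weight form is ``the image under $D^{1-k}$ of a harmonic form'' has the direction reversed. $D^{k-1}$ raises weight $2-k$ to weight $k$, and it is the weight-$k$ input whose coefficients are $\ell^{k-1}$ times those of the weight-$(2-k)$ form. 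This also settles your Eisenstein worry: $D^{k-1}$ kills constant terms, so the first sum in Theorem~\ref{theorem:borcherds} vanishes.

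The integrality and exponent steps also do not work.

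\emph{Integrality.} Rational coefficients cannot give divisibility by $(N\ell)^{k-1}$ for every $\ell$, and in the correct setup there is no ``Eisenstein-type preimage'' anyway. You need the weight-$(2-k)$ form to have \emph{integral} coefficients. This is supplied by McGraw's theorem: Proposition~\ref{prop:mcgraw} gives an integral basis of $M^!_{2-k,L}$, and Theorem~\ref{thm:integral-mcgraw} reduces integrality to finitely many coefficients. Corollary~\ref{corollary:divisibility} then gives that $(N\ell)^{k-1}$ divides the coefficients of $N^{k-1}D^{k-1}f$.

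\emph{Exponent.} Theorem~14.3 of Borcherds contributes only the weights $d^{\kappa-1}$, and its $d=1$ term is a bare input coefficient. For primitive $\lambda_0$ the coefficient is just a cyclotomic combination of values $c(\cdot,q(\lambda_0))$. So the ``further $n'^{n/2-1}$ factors'' you hope for do not exist. The paper applies Corollary~\ref{corollary:divborcherds} with $r=k$; the condition $\frac n2+k-r-1=\frac n2-1\ge 0$ absorbs the $d^{\kappa-1}$. This yields divisibility by $(\ell Nq(\lambda_0))^{k-1}$, where $k-1=\kappa-\frac n2$. Note that this matches the exponent $\kappa-1$ in the introduction's definition only for $n=2$, so do not try to manufacture the difference from the Fourier expansion.
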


\begin{Remark}
    If the space of cusp forms of weight $k$ for the Weil representation $\rho_L$ is not trivial, we have to take suitable linear combinations of the functions $\omega_{\beta, m}^{\mero}(Z)$.
\end{Remark}

\begin{Remark}
    The case of signature $(2,2)$, i.e.\@ Hilbert modular forms, has been considered by Depouilly in \cite{depouilly2024divisibilitypropertiesfouriercoefficients}.
\end{Remark}

\begin{Remark}
Corollary \ref{corollary:divisibility} gives a general method to construct magnetic orthogonal modular forms by identifying vector-valued weakly holomorphic modular forms whose Fourier coefficients satisfy a given divisibility property, even if $n$ is odd, which corresponds to the weight $k$ being half-integral. 
Such weakly holomorphic modular forms were constructed by Li and Neururer in \cite[Theorem 1.4]{lineururer}. 
They used them as input functions of the additive Borcherds lift to produce magnetic modular forms for lattices of signature $(2, 1)$. 
These weakly holomorphic modular forms can also be used to construct magnetic modular forms for even lattices of general signature:
Let $L$ be an even lattice of signature $(2, n)$ such that the discriminant group $L' / L$ splits as an orthogonal sum $L' / L \simeq D_1 \oplus D_2$. 
Let $f$ be a weakly holomorphic modular form of weight $k$ with respect to the Weil representation $\rho_{D_1}$, that satisfies the divisibility property of Corollary \ref{corollary:divisibility}. 
Moreover, let $v \in \IZ[D_2]$ be an invariant vector with respect to the Weil representation $\rho_{D_2}$. 
For the construction of such invariant vectors, see \cite{Bieker,BiekerKiefer,MuellerInvariants,ZemelInvariants}. 
Then $f \otimes v$ is a weakly holomorphic modular form of weight $k$ with respect to $\rho_L \simeq \rho_{D_1} \otimes \rho_{D_2}$ satisfying the divisibility property of Corollary \ref{corollary:divisibility}. 
In particular, its additive Borcherds lift 
is magnetic.
\end{Remark}

Magnetic modular forms have deep connections to physics that extend far beyond the origin of their name.
They appear to emerge in quantum field theory when analyzing differential equations satisfied by dimensionally regulated Feynman integrals associated with specific one-parameter families of K3 surfaces. This phenomenon was first identified in \cite{poegelwangweinzierl} and very recently in \cite{duhrmaggionegasaueretal}. In \cite{duhrmaggionegasaueretal}, the authors further propose that orthogonal magnetic modular forms originate from geometries of Calabi-Yau type. Even though the authors have not proven the magneticity of the modular forms that appear, they have checked that several hundred of the Fourier coefficients have the desired property. In \cite{magneticconjectures} B\"onisch, Duhr, and Maggio further study (elliptic) modular forms and propose several conjectures for magnetic modular forms. We remark that the magnetic orthogonal modular forms that we consider in this work satisfy the conjecture of Bönisch, Duhr, and Maggio in \cite[Conjecture 3.1]{magneticconjectures} pertaining to the magneticity at every cusp; this is inherent by Borcherds' Theorem 14.3.

This paper is organized as follows. 
We briefly review the theory of modular forms for the Weil representation in Section \ref{sec:modforms}. In Section \ref{sec:orthforms} we introduce orthogonal modular forms and the functions $\omega_{\beta, m}^{\mero}$. In Section \ref{sec:thetaliftings} we recall Borcherds' Theorem 14.3 and compute the lifting of Poincar\'{e} series. We prove Theorem \ref{thm:mainintro} in Section \ref{sec:proof}.
%
\section*{Acknowledgements}
We thank the referee for valuable comments that substantially improved the article. Moreover, we thank Mads Christensen for pointing out to us the mod $p$ version of Sturm's theorem which led to an integral McGraw statement.

\section{Modular forms for the Weil representation}\label{sec:modforms}

In this section, we review the Weil representation and the notion of weakly holomorphic modular forms. 

Throughout, let \( L \) denote an even lattice equipped with a quadratic form \( q \) of signature \( (b^+, b^-) \). The associated bilinear form is represented by \( (\cdot, \cdot) \), and \( L' \) denotes the dual lattice.

\subsection{The Weil representation}  
We denote by \( \Mp_2(\IR) \) the metaplectic group, which is the double cover of \( \SL_2(\IR) \), given by pairs \( (M, \phi) \), where  
$
M = \left(\begin{smallmatrix}a & b \\ c & d \end{smallmatrix} \right)\in \SL_2(\IR)
$
and \( \phi : \IH \to \IC \) is a holomorphic square root of \( c \tau + d \). The subgroup \( \Mp_2(\IZ) \) is generated by the two elements  
\[
S = \left(\begin{pmatrix}0 & -1 \\ 1 & 0\end{pmatrix}, \sqrt{\tau}\right), \quad  
T = \left(\begin{pmatrix}1 & 1 \\ 0 & 1\end{pmatrix}, 1\right).
\]

Now, let \( L \) be an even lattice of signature \( (b^+, b^-) \), and consider the group ring \( \IC[L' / L] \) with basis \( (\frake_\beta)_{\beta \in L' / L} \) and inner product  
\[
\langle \frake_\beta, \frake_{\gamma} \rangle = \delta_{\beta, \gamma}.
\]  
This inner product is $\IC$-linear in the first argument and $\IC$-conjugate-linear in the second argument. Additionally, we use the notation  
\[
\frake_\beta(x) = e(x) \frake_\beta = e^{2 \pi i x} \frake_\beta.
\]

The Weil representation, denoted \( \rho_L : \Mp_2(\IZ) \to \GL(\IC[L' / L]) \), is a unitary representation defined by  
\begin{align*}
    \rho_L(T) \frake_\beta &= \frake_\beta(q(\beta)),\\
\rho_L(S) \frake_\beta &= \frac{\sqrt{i}^{b^- - b^+}}{\sqrt{|L' / L|}} \sum_{\gamma \in L' / L} \frake_{\gamma}(-(\beta, \gamma)).  
\end{align*}

We define \( L^- = (L, -q) \). The associated Weil representation \( \rho_{L^-} \) corresponds to the dual Weil representation $\rho_L^*$ of \( L \), since \(\rho_L^* =\overline{\rho_L}\).  

For $\beta,\gamma\in L'/L$ and $(M,\phi)\in\Mp_2(\Z)$ we denote the coefficient $\rho_{ \gamma\beta}(M,\phi)$ of the representation $\rho_L$ by
\[
\rho_{ \gamma\beta}(M) = \langle \rho_L(M,\phi) \frake_\beta, \frake_\gamma \rangle.
\]

\subsection{Modular forms for the Weil representation}

Let $k \in \frac{1}{2}\Z$. For functions $f : \IH \to \IC[L' / L]$ and $(M, \phi) \in \Mp_2(\IZ)$ we define the slash operator by
\[
(f \vert_{k, L} (M, \phi))(\tau) = \phi(\tau)^{-2k} \rho_L^{-1}(M, \phi) f(M \tau).
\]
Let $f : \IH \to \IC[L' / L]$ be a holomorphic function that satisfies $f \vert_{k, L} (M, \phi) = f$ for all $(M, \phi) \in \Mp_2(\IZ)$. Then $f$ has a Fourier expansion
$$f(\tau) = \sum_{\gamma \in L' / L} \sum_{\ell \in \IZ + q(\lambda)} c(\gamma, \ell) \frake_\gamma(\ell \tau).$$
We call such a function \emph{weakly holomorphic modular form of weight $k$ with respect to the Weil representation $\rho_L$}, if $c(\gamma, \ell) = 0$ for all but finitely many $\gamma \in L' / L, \ell < 0$ and we denote the space of weakly holomorphic modular forms by $M_{k, L}^!$. If $c(\gamma, \ell) = 0$ for all $\gamma \in L' / L, \ell < 0$, then $f$ is called a \emph{modular form} and we denote the space of modular forms by $M_{k, L}$. If additionally $c(\gamma, 0) = 0$ for all $\gamma \in L' / L$, then $f$ is called a \emph{cusp form} and the corresponding space is denoted by $S_{k, L}$.

If $f(\tau) = \sum_{\gamma \in L' / L} \sum_{\ell \in \IZ + q(\gamma)} c(\gamma, \ell) \frake_\gamma(\ell \tau)$ is a weakly holomorphic modular form, we call
$$\sum_{\gamma \in L' / L} \sum_{\substack{\ell \in \IZ + q(\gamma) \\ \ell < 0}} c(\gamma, \ell) \frake_\gamma(\ell \tau)$$
its \emph{principal part}.

\subsection{An integrality condition for weakly holomorphic modular forms}

In this section we prove an integral refinement of the rationality statement
deduced from McGraw's theorem. 

We recall the following well known statement that is a corollary of the work of McGraw \cite{mcgraw}.

\begin{Proposition}\label{prop:mcgraw}
    Let $k < 0, k \in \frac{1}{2} \IZ$. The space $M_{k, L}^!$ has a basis of weakly holomorphic modular forms with integral Fourier coefficients. In addition, if a weakly holomorphic modular form has integral principal part, then all its Fourier coefficients are rational with bounded denominator.
\end{Proposition}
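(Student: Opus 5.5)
We will deduce Proposition \ref{prop:mcgraw} from McGraw's theorem \cite{mcgraw}. That theorem says the space $M_{k',L}$ of holomorphic modular forms for $\rho_L$ (of any weight $k'$) has a basis of forms with integral Fourier coefficients. The idea is to multiply by a power of $\Delta$ to pass from weakly holomorphic forms of negative weight to holomorphic forms, and then divide back.

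\textbf{Step 1: reduction to holomorphic forms.} Let $f\in M^!_{k,L}$ and let $-m_0$ be the lowest exponent in its principal part. Choose an integer $r>m_0$. Then $g=\Delta^r f$ is a holomorphic modular form of weight $k+12r$ for $\rho_L$. Since $\Delta$ is scalar-valued, has integral coefficients, and satisfies $\Delta^{-1}=q^{-1}\prod(1-q^n)^{-24}\in q^{-1}\IZ[[q]]$, multiplication by $\Delta^r$ is a bijection
\[
M^!_{k,L}\supset \{f:\ \text{order}\ge -r\} \longrightarrow M_{k+12r,L}
\]
that preserves integrality of Fourier coefficients in both directions.

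\textbf{Step 2: the integral basis.} By McGraw, $M_{k+12r,L}$ has a $\IZ$-basis $g_1,\dots,g_s$ with integral coefficients; more precisely, the integral forms are a lattice $\Lambda_r$ spanning the space. Dividing by $\Delta^r$ gives an integral basis of the filtration piece $F_r=\{f\in M^!_{k,L}:\operatorname{ord}\ge -r\}$. To get a basis of all of $M^!_{k,L}$, we use that $F_r\subset F_{r+1}$. The integral lattices are compatible: $\Lambda_{r+1}\cap F_r$ consists of the integral elements of $F_r$, which equals $\Delta^{-r}\Lambda_r$. Hence $F_{r+1}\cap(\text{integral})/F_r\cap(\text{integral})$ is a torsion-free finitely generated group, so it is free. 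Successively extending bases therefore yields an integral basis of $M^!_{k,L}=\bigcup_r F_r$. We also need the principal-part map to be injective, since $k<0$ means $M_{k,L}=0$. This is immediate: a holomorphic form of negative weight vanishes, for instance because its Petersson norm against itself is invariant under scaling.

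\textbf{Step 3: bounded denominators.} Suppose $f\in F_r$ has integral principal part. Because $M_{k,L}=0$, the map $F_r\to\IC^{P_r}$ sending $f$ to its principal-part coefficients is injective. The integral basis of $F_r$ maps to a set of integral vectors $v_1,\dots,v_t$ that are linearly independent. We write $P(f)=\sum a_i v_i$. Since $P(f)$ and the $v_i$ are integral, Cramer's rule on a nonsingular $t\times t$ minor gives $a_i\in \frac1{D}\IZ$, where $D$ is that minor (a fixed nonzero integer). Thus $f=\sum a_i f_i$ has rational coefficients with denominator dividing $D$.

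\textbf{Main obstacle.} The delicate point is making $D$ uniform rather than dependent on $r$. The statement only requires a bound for each $f$, but if a uniform bound is wanted, one argues that the constant $D$ for $F_r$ works for all $f$ with pole order at most $r$, which is what is being claimed.
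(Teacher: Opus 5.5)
Your argument is correct and is the standard deduction behind the paper's citation of McGraw. The paper gives no proof of its own, but it uses the same multiply-by-$\Delta^{m}$ device in the proof of Theorem~\ref{thm:integral-mcgraw}: pass to $\Delta^{r}f\in M_{k+12r,L}$, apply McGraw there, divide back, and combine $M_{k,L}=0$ with Cramer's rule to bound the denominators. The one thing to repair is your reason for $M_{k,L}=0$ when $k<0$, which as written is not an argument. Instead, note that $v^{k}\|f(\tau)\|^{2}$ is $\Mp_2(\IZ)$-invariant because $\rho_L$ is unitary, tends to $0$ at the cusp, and has strictly subharmonic logarithm wherever $f\neq 0$, so it cannot attain a positive maximum.
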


We shall use the following form of Sturm's
theorem.

\begin{Theorem}{\cite[Theorem 1]{sturm}}\label{thm:sturm}
Let \(
g(\tau)=\sum_{n\geq 0} a(n)q^n
\)
be a holomorphic modular form of weight \(\kappa\) with respect to
\(\Gamma(N)\), and assume that all Fourier coefficients \(a(n)\) are integral.
Let \(p\) be a prime. If
\[
p\mid a(n)
\qquad
\text{for all } n\leq \frac{\kappa[\Gamma(1):\Gamma(N)]}{12},
\]
then
\[
p\mid a(n)
\qquad
\text{for all } n\geq 0.
\]
\end{Theorem}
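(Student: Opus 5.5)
The plan is Sturm's original strategy: take the norm of $g$ down to level one, and use the fact that a level one form whose reduction mod $p$ vanishes to order greater than $K/12$ must vanish mod $p$. Set $\mu=[\Gamma(1):\Gamma(N)]$ and choose coset representatives $\gamma_1=1,\gamma_2,\dots,\gamma_\mu$ of $\Gamma(N)\backslash\Gamma(1)$. Each $g\vert_\kappa\gamma_i$ is again a modular form for $\Gamma(N)$, with an expansion in powers of $q^{1/N}$.

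First I will show that these expansions have coefficients in a number field and bounded denominators. This follows from the $q$-expansion principle (Shimura, or Katz over $\IZ[\zeta_N,1/N]$), or from the fact that $M_\kappa(\Gamma(N))$ has a basis with integral coefficients and that $\Gamma(1)$ permutes $\Gamma(N)$-forms with cyclotomic coefficients. Fix a finite extension $F/\IQ(\zeta_N)$, a prime $\mathfrak P\mid p$ of $\mathcal O_F$, and a uniformiser $\pi$. For each $i$ I choose an exponent $c_i$ such that $h_i=\pi^{-c_i}\,g\vert_\kappa\gamma_i$ has $\mathfrak P$-integral coefficients and nonzero reduction mod $\mathfrak P$. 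This is where the bounded denominators are needed.

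Now suppose, for contradiction, that $p\nmid a(n)$ for some $n$, and let $n_0$ be the least such index. By hypothesis $n_0>\kappa\mu/12$. For $i=1$ no scaling is needed, so $c_1=0$ and $\operatorname{ord}_\infty(h_1 \bmod \mathfrak P)=n_0$. For the other $i$ we only use $\operatorname{ord}_\infty(h_i\bmod\mathfrak P)\ge 0$. The product $G=\prod_i h_i$ is a modular form of weight $K=\kappa\mu$ for $\Gamma(1)$, since right multiplication by $\Gamma(1)$ permutes the cosets (for odd $\kappa$ use the $\pm1$ coset bookkeeping, or pass to $g^2$). Its coefficients are $\mathfrak P$-integral. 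Because $\mathcal O_F/\mathfrak P[[q^{1/N}]]$ is an integral domain, the reduction of $G$ is nonzero and satisfies $\operatorname{ord}_\infty(G\bmod\mathfrak P)\ge n_0>K/12$.

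The contradiction comes from the level one case. Every $G\in M_K(\Gamma(1))$ with $\mathfrak P$-integral coefficients is an $\mathcal O_{F,\mathfrak P}$-linear combination of the Miller-type basis $\Delta^jE_4^aE_6^b$ with $12j+4a+6b=K$. This basis has integral $q$-expansions with leading term $q^j$, and $j\le K/12$. The Miller basis is triangular and unitriangular in $q$, so integrality of the combination follows from integrality of $G$. Reducing mod $\mathfrak P$, a nonzero combination has order at most $\max j\le K/12$. Hence $\operatorname{ord}_\infty(G\bmod\mathfrak P)\le K/12$ whenever the reduction is nonzero, contradicting the previous paragraph. Therefore $p\mid a(n)$ for all $n$.

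I expect the main obstacle to be the integrality step: proving that the conjugates $g\vert_\kappa\gamma_i$ have bounded denominators, including at primes $p\mid N$. I would settle this by citing the $q$-expansion principle, or by expressing $g\vert\gamma_i$ in terms of Eisenstein series and theta-type generators with cyclotomic integral expansions. The scaling by $\pi^{-c_i}$ then handles all primes uniformly.
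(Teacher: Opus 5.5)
Your proposal is correct and is essentially Sturm's own proof: take the norm $\prod_i g\vert_\kappa\gamma_i$ down to level one, using bounded denominators of the conjugates, and compare with the unitriangular integral basis of $M_K(\Gamma(1))$; the paper gives no proof of its own here and only cites \cite[Theorem 1]{sturm}. Two cosmetic points: take a single monomial $\Delta^jE_4^{a_j}E_6^{b_j}$ for each $j$, since the full set of monomials with $12j+4a+6b=K$ is not linearly independent; and drop the odd-$\kappa$ bookkeeping, since $\vert_\kappa$ is a genuine right action of $\SL_2(\IZ)$ for integral $\kappa$, so the product over all $\SL_2(\IZ)$-cosets of $\Gamma(N)$ is automatically invariant under $\Gamma(1)$.
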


As a consequence, we obtain the following integral version of McGraw's theorem.

\begin{Theorem}\label{thm:integral-mcgraw}
Assume that the Weil representation \(\rho_L\) descends to
\(SL_2(\mathbb Z)\), and choose \(N\) such that
\(
\Gamma(N)\subseteq \ker(\rho_L).
\)
Let
\(
f(\tau)=
\sum_{\gamma\in L'/L}
\sum_{\ell\in \mathbb Z+q(\gamma)}
c_f(\gamma,\ell)e(\ell\tau)\mathfrak e_\gamma
\in M_{k,L}^!
\)
be a weakly holomorphic modular form of weight \(k<0\). Let
\[
m:=
\min\left\{r\in\mathbb Z_{\geq 0} :
\Delta^r f\in M_{k+12r,L}\right\}.
\]
Put
\(\kappa=k+12m\) and
assume that
\(
c_f(\gamma,\ell)\in\mathbb Z
\)
for all \(\gamma\in L'/L\) and all
\(
\ell\leq
\max(-1,\frac{\kappa[\Gamma(1):\Gamma(N)]}{12}-m).
\)
Then all Fourier coefficients of \(f\) are integral.
\end{Theorem}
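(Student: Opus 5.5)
Multiply by \(\Delta^m\) to turn \(f\) into a holomorphic form, transfer the integrality hypothesis, and then apply Sturm's theorem prime by prime. Since \(\Gamma(N)\subseteq\ker\rho_L\) and \(\rho_L\) descends to \(\SL_2(\IZ)\), each component \(f_\gamma(\tau)=\sum_\ell c_f(\gamma,\ell)e(\ell\tau)\) is a scalar weakly holomorphic modular form of weight \(k\) for \(\Gamma(N)\). Its expansion is in powers of \(q^{1/N}\), because \(N q(\gamma)\in\IZ\). Put \(g=\Delta^m f\in M_{\kappa,L}\); its components \(g_\gamma=\Delta^m f_\gamma\) are holomorphic modular forms of weight \(\kappa\) for \(\Gamma(N)\). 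Since \(\Delta=q\prod(1-q^n)^{24}\) has integral coefficients and leading coefficient \(1\), the map \(f_\gamma\mapsto\Delta^m f_\gamma\) is upper triangular and unimodular on coefficients. Concretely,
\[
c_g(\gamma,\ell+m)=\sum_{j\ge 0} d_m(j)\,c_f(\gamma,\ell-j),\qquad d_m(0)=1,\ d_m(j)\in\IZ .
\]
Conversely, \(c_f(\gamma,\ell)\) is \(c_g(\gamma,\ell+m)\) plus an integral combination of \(c_g(\gamma,\ell+m-j)\) for \(j\geq 1\), obtained by inverting via \(\Delta^{-m}=q^{-m}(1+\dots)\) with integral coefficients. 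Because \(f\) has a pole of order at most \(m\), the coefficients \(c_f(\gamma,\ell)\) vanish for \(\ell<-m\). Hence \(c_g(\gamma,n)\) is integral whenever every \(c_f(\gamma,\ell)\) with \(\ell\le n-m\) is integral. By hypothesis this covers all \(n\le \kappa[\Gamma(1):\Gamma(N)]/12\).

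It remains to show that all coefficients of \(g\) are integral; the inverse relation above then gives integrality of all \(c_f\). Integrality of \(g\) is the main point, since Sturm's theorem as stated assumes integrality of all coefficients and only gives divisibility by \(p\). I would first use Proposition \ref{prop:mcgraw}. The principal part of \(f\) is integral, as it lies in the range \(\ell\le -1\) covered by the hypothesis, so all coefficients of \(f\), and hence of \(g\), are rational with bounded denominator. Choose the minimal \(D\in\IN\) with \(Dg_\gamma\) integral for all \(\gamma\), and suppose \(D>1\). Take a prime \(p\mid D\). Then \(Dg_\gamma\) is a modular form of weight \(\kappa\) for \(\Gamma(N)\) with integral coefficients, and the coefficients of index \(n\le\kappa[\Gamma(1):\Gamma(N)]/12\) are \(D\) times integers, hence divisible by \(p\).

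We apply Theorem \ref{thm:sturm} to each \(Dg_\gamma\), written in the local parameter \(q^{1/N}\), which is the setting where Sturm's bound with index \([\Gamma(1):\Gamma(N)]\) is formulated. It yields \(p\mid D c_g(\gamma,n)\) for all \(n\). Thus \((D/p)g\) is already integral, which contradicts the minimality of \(D\). So \(D=1\) and \(g\) is integral.

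The delicate bookkeeping step is matching the Sturm bound to the normalization of exponents, which are in \(\frac1N\IZ\), and to the shift by \(m\). This is why the hypothesis is stated with \(\ell\le \kappa[\Gamma(1):\Gamma(N)]/12-m\). The \(\max(-1,\cdot)\) ensures that the principal part is always included, which is needed for the bounded-denominator input from McGraw.
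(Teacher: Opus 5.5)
Your proposal follows the paper's proof step for step. Both pass to $F=\Delta^m f\in M_{\kappa,L}$, transfer integrality up to the Sturm bound through the unimodular triangular relation between the coefficients of $f$ and $F$, and get bounded denominators from McGraw. Both then remove a prime dividing the minimal denominator by applying Sturm's theorem componentwise on $\Gamma(N)$, and finally divide by $\Delta^m$; the paper writes $\Delta^{-1}$, and your $\Delta^{-m}$ is what is meant. Your bookkeeping of the shift by $m$ and of the Sturm bound in the parameter $q^{1/N}$ is fine.

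One claim is false as stated, and the paper's proof relies on it tacitly when it cites McGraw. The principal part of $f$ consists of all indices $\ell<0$ with $\ell\in\IZ+q(\gamma)$, not only those with $\ell\le -1$. Put $X:=\kappa[\Gamma(1):\Gamma(N)]/12-m$. If $X<0$, the hypothesis covers only $\ell\le\max(-1,X)$, so coefficients with $\max(-1,X)<\ell<0$ are not assumed integral. In that case the principal-part version of McGraw's theorem cannot be invoked.

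This does occur. Take $L=D_4$, $N=2$ and $f=\Theta_{D_4}\Delta^{-2}$. Then $k=-22$, $m=2$, $\kappa=2$ and $X=-1$. For $\gamma\neq 0$ the component is $f_\gamma=8q^{-3/2}+c\,q^{-1/2}+\cdots$, and $c$ is not covered by the hypothesis.

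The theorem itself survives, but the bounded-denominator step has to be obtained differently. Use McGraw's theorem for holomorphic forms: $M_{\kappa,L}$ has a basis $G_1,\dots,G_d$ with integral coefficients. By the valence formula on $\Gamma(N)$, an element of $M_{\kappa,L}$ is determined by its coefficients of index $n\le\kappa[\Gamma(1):\Gamma(N)]/12$. Those coefficients of $F$ are integral by what you already showed, since they involve only $c_f(\gamma,\ell)$ with $\ell\le X$. Hence the $a_i$ in $F=\sum a_iG_i$ are the unique solution of a linear system with integer matrix and integer right-hand side, so they are rational. Thus $F$ has bounded denominators, and the rest of your argument goes through unchanged. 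With this route the $\max(-1,\cdot)$ in the hypothesis is not even needed. The alternative is to add all $\ell<0$ to the hypothesis.
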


\begin{Remark}

The assumption that \(\rho_L\) descends to \(SL_2(\mathbb Z)\) is satisfied,
for example, when the signature of \(L\) is even. Under this hypothesis one
can choose \(N\) such that \(\Gamma(N)\subseteq\ker(\rho_L)\), and the
componentwise application of Sturm's theorem is legitimate. In the general
metaplectic case, the same proof works after replacing \(\Gamma(N)\) by a
finite-index subgroup of \(\operatorname{Mp}_2(\mathbb Z)\) on which
\(\rho_L\) is trivial and using the corresponding Sturm bound.

\end{Remark}
\begin{proof}
We let
\[
F:=\Delta^m f
=
\sum_{\gamma\in L'/L}
\sum_{\ell\in\mathbb Z+q(\gamma)}
b(\gamma,\ell)e(\ell\tau)\mathfrak e_\gamma .
\]
Then \(F\in M_{\kappa,L}\), where \(\kappa=k+12m\). By McGraw's theorem, $f$ has rational coefficients with bounded denominator.

Since $\Delta$ has integral Fourier coefficients,
the coefficient \(b(\gamma,\ell)\) is an integral linear combination of
coefficients \(c_f(\gamma,\ell-r)\) with \(r\geq m\) and therefore, also the coefficients $b(\gamma, \ell)$ are rational with bounded denominator. Moreover, if
\(
\ell\leq \frac{\kappa[\Gamma(1):\Gamma(N)]}{12},
\)
then
\(
\ell-r
\leq
\frac{\kappa[\Gamma(1):\Gamma(N)]}{12}-m
\)
and it follows that
\(
b(\gamma,\ell)\in\mathbb Z
\)
for all \(\gamma\in L'/L\) and all
\(\ell\leq \frac{\kappa[\Gamma(1):\Gamma(N)]}{12}.
\)

Since \(\Gamma(N)\subseteq\ker(\rho_L)\), every
component of a form in \(M_{\kappa,L}\) is a scalar modular form of weight
\(\kappa\) for \(\Gamma(N)\).

Let \(M\geq 1\) be the smallest positive integer such that \(MF\) has integral
Fourier coefficients. We show that \(M=1\). Suppose that \(M>1\), and let
\(p\mid M\) be a prime. Since the coefficients \(b(\gamma,\ell)\) are integral
for
\(
\ell\leq \frac{\kappa[\Gamma(1):\Gamma(N)]}{12},
\)
the corresponding coefficients of \(MF\) are divisible by \(p\). Applying
Sturm's theorem componentwise to \(MF\), we obtain that all Fourier
coefficients of \(MF\) are divisible by \(p\). Hence \((M/p)F\) still has
integral Fourier coefficients, contradicting the minimality of \(M\). Thus
\(M=1\), and \(F\) has integral Fourier coefficients.

The claim now follows by multiplying with $\Delta^{-1}$, which also has integral Fourier coefficients.
\end{proof}

\subsection{Differential operators}

Let $\tau\in\IH$, $\tau=u+iv$. Recall the weight raising and lowering operators 
\[
    R_k= 2i \frac{\partial}{\partial\tau} +kv^{-1},\quad L_k= -2iv^2\frac{\partial}{\partial\bar{\tau}}.
\]
They raise respectively lower the weight of a modular form by $2$.

The weight $k$ Laplace operator is given by
\[
\Delta_k=-R_{k-2}L_k.
\]
We also define an iterated version of the raising operator by $R_k^n=R_{k+2(n-1)}\circ\ldots\circ R_{k+2}\circ R_k$.

A \textit{harmonic weak Maass form of weight $k$ with respect to the Weil representation $\rho_L$} is a real-analytic function $f:\IH\to\IC $ that transforms like a modular form of weight $k$ for $\rho_L$, is annihilated by the Laplace operator $\Delta_k$, and grows at most linear exponentially as $v\to\infty$. We denote this space by $H_{k,L}$.

The differential operator
\[
\xi_k f = v^{k-2} \overline{L_k f} = R_{-k} v^k \overline{f}
\]
maps harmonic weak Maass forms of weight $k$ for the Weil representation $\rho_L$ to cusp forms of weight $2-k$ for the dual Weil representation $\rho^*_L$.

We further define $D:=\frac{1}{2\pi i }\frac{\partial}{\partial \tau}$. Let $k\geq 2$ be an integer. The Bol operator
\[
 D^{k-1}= \frac{1}{(-4\pi)^{k-1}} R_{2-k}^{k-1} 
\]
maps harmonic weak Maass forms of weight $2-k$ to weakly holomorphic modular forms of weight $k$ with vanishing constant term at all cusps. Moreover, the image consists of those weakly holomorphic modular forms that are orthogonal to cusp forms with respect to the regularised Petersson inner product (see \cite[Theorem 1.2]{bruinieronorhoades}).

\begin{Lemma}{\cite[Theorem 1.1]{bruinieronorhoades}}
    Let $f\in H_{2-k,L}$ be a harmonic weak Maass form of weight $2-k, \, k \geq 2$, with holomorphic part $f^+(\tau) = \sum_{\gamma \in L' / L} \sum_{\ell \in \IZ + q(\gamma)} c(\gamma, \ell) \frake_\gamma(\ell \tau)$. Then the Fourier expansion of $D^{k-1} f$ is given by 
    \[
     D^{k-1} f = D^{k-1}f^+ = \sum_{\gamma \in L' / L} \sum_{\ell \in \IZ + q(\gamma)} c(\gamma, \ell) \ell^{k-1} \frake_\gamma(\ell \tau).
    \]
\end{Lemma}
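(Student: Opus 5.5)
The plan is to split $f = f^+ + f^-$ into holomorphic and non-holomorphic parts, compute $D^{k-1}$ on each separately, and show that $D^{k-1}$ annihilates $f^-$. The first step is to record the shape of $f^-$. Since $\Delta_{2-k} f = 0$, the function $L_{2-k} f$ is, up to the factor $v^{-k}$ and complex conjugation, the holomorphic function $\xi_{2-k} f$. By the statement recalled above, $\xi_{2-k} f$ is a cusp form of weight $k$ for $\rho_L^*$, so it has no constant term and only positive-index terms. Integrating in $\bar\tau$ then gives
\[
f^-(\tau) = \sum_{\gamma \in L'/L} \sum_{\substack{\ell \in \IZ + q(\gamma)\\ \ell<0}} c^-(\gamma,\ell)\, \Gamma(k-1, 4\pi|\ell| v)\, \frake_\gamma(\ell\tau),
\]
with no $v^{k-1}$ term in the constant part, precisely because $\xi_{2-k} f$ is cuspidal.

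The second step uses Bol's identity. It says that $R^{k-1}_{2-k} = (-4\pi)^{k-1} D^{k-1}$ holds as an identity of differential operators on all smooth functions, not only on holomorphic ones. This is the classical computation by induction on $k$ using $R_k = 2i\partial_\tau + k v^{-1}$; here I would take it as given, since the paper already writes $D^{k-1}$ in this form. Consequently $D^{k-1} f = (2\pi i)^{1-k}\, \partial_\tau^{k-1} f$, computed componentwise. On $f^+$ termwise differentiation of $\frake_\gamma(\ell\tau)$ gives the factor $\ell^{k-1}$ directly. Termwise differentiation is justified by locally uniform convergence of the Fourier series.

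The main obstacle is showing that $\partial_\tau^{k-1}$ kills every term of $f^-$. Because $k-1$ is a positive integer, we have
\[
\Gamma(k-1,x) = (k-2)!\, e^{-x}\sum_{j=0}^{k-2} \frac{x^j}{j!}.
\]
For $\ell<0$ this gives
\[
e^{-4\pi|\ell|v}\, e(\ell\tau) = e^{-2\pi|\ell| v}\, e^{-2\pi i |\ell| u} = \overline{e(|\ell|\tau)},
\]
which is antiholomorphic. Hence each term of $f^-$ equals an antiholomorphic function times a polynomial in $v = (\tau-\bar\tau)/(2i)$ of degree at most $k-2$. As a function of $\tau$ with $\bar\tau$ held fixed, this is a polynomial of degree at most $k-2$, so $\partial_\tau^{k-1}$ annihilates it. Here the absence of the $v^{k-1}$ constant term, guaranteed by the cuspidality of $\xi_{2-k} f$, is exactly what is needed. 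Therefore $D^{k-1} f^- = 0$, which gives $D^{k-1} f = D^{k-1} f^+$ together with the stated expansion.
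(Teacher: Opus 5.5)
Your proof is correct. The paper does not prove this lemma itself; it only cites Bruinier--Ono--Rhoades. Your argument is the standard direct computation behind that theorem: reduce $D^{k-1}$ to $(2\pi i)^{1-k}\partial_\tau^{k-1}$, then note that every term of $f^-$ is an antiholomorphic function times a polynomial in $v$ of degree at most $k-2$, which $\partial_\tau^{k-1}$ kills.

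Your remark that the cuspidality of $\xi_{2-k}f$ is exactly what rules out a $v^{k-1}$ term is worth keeping. The paper's stated definition of $H_{2-k,L}$ only requires linear exponential growth, and that is too weak for the cuspidality claim you invoke. Under that definition $\xi_{2-k}f$ can have a nonzero constant term, so $f^-$ contains a nonzero multiple of $v^{k-1}$, and then $D^{k-1}f$ and $D^{k-1}f^+$ differ by a nonzero constant. So the lemma, and your proof, need the Bruinier--Funke growth condition that the cited source uses.
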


\begin{Corollary}\label{corollary:divisibility}
    Let $k\geq 2$ and let $f \in H_{2-k,L}$ be a harmonic weak Maass form whose Fourier coefficients of the holomorphic part $f^+(\tau) = \sum_{\gamma \in L' / L} \sum_{\ell \in \IZ + q(\gamma)} c(\gamma, \ell) \frake_\gamma(\ell \tau)$ are integral. Then the Fourier coefficients $b(\gamma, \ell)$ of $N^{k-1} D^{k-1} f$ satisfy $(N\ell)^{k-1} \mid b(\gamma, \ell)$, where $N$ denotes the level of the lattice $L$.
\end{Corollary}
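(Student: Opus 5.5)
This is a direct computation from the preceding Lemma (Bruinier--Ono--Rhoades, Theorem 1.1). By that lemma,
\[
N^{k-1}D^{k-1}f=\sum_{\gamma\in L'/L}\sum_{\ell\in\IZ+q(\gamma)} N^{k-1}c(\gamma,\ell)\,\ell^{k-1}\,\frake_\gamma(\ell\tau),
\]
so the coefficient is
\[
b(\gamma,\ell)=N^{k-1}\ell^{k-1}c(\gamma,\ell)=(N\ell)^{k-1}c(\gamma,\ell).
\]
It remains only to interpret the divisibility $(N\ell)^{k-1}\mid b(\gamma,\ell)$ correctly, since $\ell$ lies in $\IZ+q(\gamma)$ and is rational rather than integral in general.

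The key arithmetic input is that $N\ell\in\IZ$ for every $\ell\in\IZ+q(\gamma)$. The level $N$ is the smallest positive integer with $Nq(\lambda)\in\IZ$ for all $\lambda\in L'$. Hence $Nq(\gamma)\in\IZ$, and therefore $N\ell\in N\IZ+Nq(\gamma)\subseteq\IZ$. Consequently $(N\ell)^{k-1}$ is an integer; this uses $k\geq 2$, so that $k-1$ is a nonnegative integer. Since $c(\gamma,\ell)\in\IZ$ by hypothesis, the factorisation $b(\gamma,\ell)=(N\ell)^{k-1}\cdot c(\gamma,\ell)$ exhibits $b(\gamma,\ell)$ as an integer multiple of the integer $(N\ell)^{k-1}$. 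This is exactly the claimed divisibility. When $\ell=0$ the statement reads $0\mid 0$ and holds trivially, consistent with the vanishing constant term of $D^{k-1}f$.

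There is no real obstacle. The only points needing care are:
\begin{itemize}
\item the role of the factor $N^{k-1}$, which is there precisely to make $(N\ell)^{k-1}$ integral so that the divisibility is meaningful;
\item the check that the Lemma applies, namely that $k$ is an integer $\geq 2$ so that $D^{k-1}$ is the Bol operator;
\item the fact that only the holomorphic part $f^+$ contributes, since $D^{k-1}$ annihilates the nonholomorphic part.
\end{itemize}
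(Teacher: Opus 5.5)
Your proposal is correct and is exactly the paper's intended argument: the paper states this corollary without proof as an immediate consequence of the Bruinier--Ono--Rhoades lemma, reading off $b(\gamma,\ell)=(N\ell)^{k-1}c(\gamma,\ell)$ and using $N\ell\in\IZ$ for $\ell\in\IZ+q(\gamma)$. Your remarks on the integrality of $N\ell$ and the role of the factor $N^{k-1}$ just make explicit what the paper leaves implicit.
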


\begin{Remark}
    In general, it is not expected that harmonic weak Maass forms have integral Fourier coefficients. Apart from Ramanujan's theta functions, whose Fourier coefficients admit a combinatorial interpretation, there are very few examples of harmonic weak Maass forms known whose coefficients are not generically transcendent (compare \cite{bruinieronoannals} and \cite{ehlenlischwagenscheidt}).
    Using the result of \cite{mcgraw}, there are many weakly holomorphic modular forms with this property.
\end{Remark}

\subsection{Poincar\'e series}

Let $k\in\frac{1}{2}\IZ$ and let $\kappa = \frac{b^-}{2} - \frac{b^+}{2} + k$. By $W_{\nu, \mu}(v), M_{\nu,\mu}(v)$ we denote the usual Whittaker functions (see \cite[Chapter 13.4]{nist}).

For $s\in\IC$ and $v \in\IR^\times$ we let
\begin{align*}
    \mathcal{M}_{k, s}(v) = \lvert v \rvert^{-k/2} M_{\sgn(v)k/2,s-1/2}(\lvert v \rvert), \\
    \calW_{k, s}(v) = \lvert v \rvert^{-k/2} W_{\sgn(v) k/2,s-1/2}(\lvert v \rvert).
\end{align*}
These functions are eigenfunctions of $\Delta_k$ of eigenvalue $s(1-s)+(k^2-2k)/4$.
The harmonic points are given by $s = k/2$ and $s = 1 - k/2$. We have $\calM_{k, k/2}(v) = e^{-v/2}$ and for $v \in \IR_{>0}$ we have $\calW_{k, k/2}(v) = \calW_{2-k, 1 - (2 -k)/2}(v) = e^{-v/2}$ (compare \cite[Equation(2.20)]{jeonkangkim} and note our different normalisation of $\calW_{k,s}$).

For $k\in\frac12\IZ$, $2k + b^- - b^+ = 2 \kappa \equiv 0 \pmod{2}$, $\beta\in L'/L$, and $m\in \IZ+q(\beta),\, m \neq 0$ we define the \emph{Poincar\'e series} $\mathcal{F}_{\beta, m, k}(\tau,s)$ for the Weil representation $\rho_{L}$ by
\[
    \mathcal{F}_{\beta, m, k}(\tau,s) =\frac{1}{2\G(2s)} \sum_{(M, \phi) \in \tilde{\Gamma}_\infty \backslash \Mp_2(\Z)} \left(\mathcal{M}_{k,s}(4\pi m v)e(mu) \frake_\beta\right) \vert_{k, L}(\gamma, \phi).
\]
It converges for $\Re(s)>1$ and is annihilated by the weight $k$ Laplace operator $\Delta_k$ at the harmonic points $s=1-\frac{k}{2}, \frac{k}{2}$. 
We will be interested in the special values
\[
\calF_{\beta, m, k}(\tau) = \calF_{\beta, m, k}(\tau, k/2), \qquad \calF_{\beta, m, 2-k}(\tau) = \calF_{\beta, m, 2-k}(\tau, k/2).
\]
In the first two cases, we obtain cusp forms for $m > 0$ and weakly holomorphic modular forms for $m < 0$. In the other two cases, we have harmonic weak Maass forms for $m < 0$.

Under the Bol operator, Poincar\'{e} series map to Poincar\'{e} series. 
\begin{Lemma}[Theorem 6.11 iii) in \cite{onobook}]\label{lemma:bol}
    For $k\in2 \N$ we have
    \[
    D^{k-1} \calF_{\beta,m,2-k} (\tau)=m^{k-1} \calF_{\beta,m,k}(\tau). 
    \]
\end{Lemma}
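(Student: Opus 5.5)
The plan is to reduce the identity to a statement about the seed functions of the two Poincaré series. The Bol operator is compatible with the slash operators: $D^{k-1}(f\vert_{2-k,L}(M,\phi))=(D^{k-1}f)\vert_{k,L}(M,\phi)$ for every $(M,\phi)\in\Mp_2(\IZ)$. This is Bol's identity, and it holds for $\rho_L$ because $\rho_L^{-1}(M,\phi)$ is a constant matrix and commutes with differentiation. Equivalently, I would use $D^{k-1}=(-4\pi)^{1-k}R_{2-k}^{k-1}$ and the fact that each $R_j$ intertwines $\vert_{j,L}$ and $\vert_{j+2,L}$. It therefore suffices to apply $D^{k-1}$ termwise to the sum over $\tilde\Gamma_\infty\backslash\Mp_2(\IZ)$ and to compute $D^{k-1}$ of the seed $\calM_{2-k,s}(4\pi m v)e(mu)\frake_\beta$.

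For termwise differentiation I would first work with $\Re(s)>1$, where the series and all its derivatives converge locally uniformly, and prove the identity there as an identity in $s$. The goal at this stage is
\[
R_{2-k}^{k-1}\bigl(\calM_{2-k,s}(4\pi m v)e(mu)\bigr)=C_k(s,m)\,\calM_{k,s}(4\pi m v)e(mu)
\]
for an explicit constant $C_k(s,m)$. I would prove it one step at a time. For each $j$, show that $R_j\bigl(\calM_{j,s}(4\pi m v)e(mu)\bigr)$ equals a constant times $\calM_{j+2,s}(4\pi mv)e(mu)$. Writing $R_j=2i\partial_\tau+jv^{-1}$ in terms of $\partial_u$ and $\partial_v$ reduces this to a first-order contiguity relation for $M_{\nu,\mu}$ that shifts $\nu$ by one with $\mu=s-\tfrac12$ fixed. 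I would take this relation from \cite[13.15]{nist}, separately for $\sgn(m)=\pm1$. The product of the $k-1$ step constants gives $C_k(s,m)$.

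Finally I pass to the harmonic point $s=k/2$. The normalising factor $\frac{1}{2\Gamma(2s)}$ is the same on both sides, so it plays no role. For $k\geq 4$ we have $k/2>1$, so the series already converges absolutely at $s=k/2$ and we may simply evaluate there. For $k=2$ the point $s=1$ is on the boundary of convergence, so I would instead argue by analytic continuation in $s$ (or use Hecke's trick), using that both sides continue holomorphically to $s=1$. It then remains to check that $(-4\pi)^{1-k}C_k(k/2,m)=m^{k-1}$.

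As a cross-check on this constant, at the harmonic point the image $\calF_{\beta,m,k}$ has seed $e(m\tau)$, since $\calM_{k,k/2}(v)=e^{-v/2}$. Moreover, $D^{k-1}$ acts on the holomorphic part termwise by multiplying $e(\ell\tau)$ by $\ell^{k-1}$, by the lemma of \cite{bruinieronorhoades} recalled above. The seed of $\calF_{\beta,m,2-k}$ is expected to consist of $e(m\tau)$ plus a piece killed by $D^{k-1}$. So the factor $m^{k-1}$ is forced, provided the $e(m\tau)$-part of the seed has coefficient exactly $1$ with no extra $\Gamma$-factor.

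I expect the main obstacle to be this normalisation bookkeeping. I need to verify that $\calM_{2-k,k/2}(4\pi m v)e(mu)$ for $m<0$ really has the shape $e(m\tau)$ plus a $D^{k-1}$-null term, with the same constant as in the weight-$k$ seed. If the contiguity constants leave a residual ratio of $\Gamma$-values, that ratio has to cancel against the common $\Gamma(2s)$ normalisation.
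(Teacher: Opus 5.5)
The paper gives no proof of this lemma beyond the citation. Your route is the standard argument: Bol's identity $D^{k-1}=(-4\pi)^{1-k}R_{2-k}^{k-1}$, its compatibility with $\vert_{2-k,L}$ and $\vert_{k,L}$, termwise differentiation for $\Re(s)>1$, and reduction to the seeds. Those structural steps are fine.

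The gap is the normalisation step you deferred, and it does not close: the check $(-4\pi)^{1-k}C_k(k/2,m)=m^{k-1}$ is false for $k\geq 4$. Put $x=4\pi|m|v$ with $m<0$. Then $\calM_{2-k,k/2}(4\pi m v)=x^{k-1}e^{-x/2}\,{}_1F_1(1;k;x)$. Now use ${}_1F_1(1;k;x)=(k-1)x^{1-k}e^{x}\gamma(k-1,x)$, $\gamma(k-1,x)=(k-2)!-\Gamma(k-1,x)$ and $\Gamma(k-1,x)=(k-2)!\,e^{-x}\sum_{j=0}^{k-2}x^j/j!$. This gives
\[
\calM_{2-k,k/2}(4\pi m v)\,e(mu)=(k-1)!\Bigl(e(m\tau)-e(m\bar\tau)\sum_{j=0}^{k-2}\frac{(4\pi|m|v)^j}{j!}\Bigr).
\]
The second part is antiholomorphic times a polynomial of degree $k-2$ in $v$, so $D^{k-1}$ kills it, as you expected. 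But the $e(m\tau)$-part has coefficient $(k-1)!$. The weight-$k$ seed, by contrast, is $\calM_{k,k/2}(4\pi mv)e(mu)=e(m\tau)$ with coefficient $1$. So your proviso ``no extra $\Gamma$-factor'' fails, and in fact $(-4\pi)^{1-k}C_k(k/2,m)=(k-1)!\,m^{k-1}$.

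Your fallback, that a leftover $\Gamma$-factor ``has to cancel against the common $\Gamma(2s)$ normalisation'', cannot work. As you observed yourself, $\frac{1}{2\Gamma(2s)}=\frac{1}{2\Gamma(k)}$ multiplies both sides and simply drops out, so it cannot absorb anything. Carried out correctly with the normalisations fixed in this paper, your argument proves
\[
D^{k-1}\calF_{\beta,m,2-k}(\tau)=(k-1)!\,m^{k-1}\,\calF_{\beta,m,k}(\tau).
\]
This agrees with the stated identity only for $k=2$, where the seed is $e(m\tau)-e(m\bar\tau)$. The constant $m^{k-1}$ is correct only for a normalisation in which both seeds have the same $e(m\tau)$-coefficient. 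One example is using the factor $\frac12$ instead of $\frac{1}{2\Gamma(2s)}$ in front of $\calF_{\beta,m,k}$. You must therefore either prove the identity with the factor $(k-1)!$ or change the normalisation explicitly; as planned, the last step of your proof fails.
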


\section{Orthogonal modular forms}\label{sec:orthforms}
In this section we define the orthogonal upper half plane, orthogonal modular forms, and  the functions $\omega$. We follow the exposition in \cite{kiefer}.

\subsection{The orthogonal upper half plane}
We let $L$ be an even lattice of signature $(2,n)$, $n>0$, with quadratic form $q$. By $(\cdot,\cdot)$ we denote the associated bilinear form. 
Let $u\in L$ be a primitive isotropic vector, and let $u'\in L$ with $(u,u')=1$. 
Moreover, we let $\tilde{u}'=u'-q(u')u$. 
We define a lattice $K$ by
\[
K=L\cap u^\perp \cap u'^\perp,
\]
and write $V=L\otimes \IQ$, $V(\IR)=L\otimes \IR$, $V(\IC)=L\otimes\IC$, and $W=K\otimes\IQ$, $W(\IR)=K\otimes\R$, $W(\IC)=K\otimes\IC$, respectively. 
We let
\[
    \IH_n^\pm =\left\{ Z=X+iY\in W(\IC)\,:\, q(Y)>0\right\},
\]
which has two connected components. We denote one of them by $\IH_n$. 
It is called \emph{orthogonal upper half plane}. 
Then $\IH_n = K \otimes \IR \oplus i \calC$, where $\calC \subseteq K \otimes \IR$ is a connected component of the \emph{positive cone}.

We let $\mathbb{P}(V(\IC))$ be the associated projective space and denote the canonical projection by $V(\IC)\to \mathbb{P}(V(\IC)),\, Z\to\left[ Z\right]$. 
The map 
\[
    \psi:\IH^\pm_n \to V(\IC),\, Z \mapsto \psi(Z)=Z-q(Z)u+\tilde{u}'
\]
induces a biholomorphic map 
\begin{equation}\label{eq:psi}
    \IH_n^\pm \to \left\{ \left[Z\right]=\left[X+iY\right] \in\mathbb{P}(V(\IC))\,:\, (Z,Z)=0,\, (Z,\overline{Z})>0\right\}. 
\end{equation}
The orthogonal group $O(V(\IR))$ has a natural action on the right side, giving an action on $\IH_n^\pm$ by $\sigma Z:=\psi^{-1} (\sigma\psi(Z))$. 

We define the factor of automorphy $j(\sigma,Z)$ by
\[
j(\sigma, Z)=(u,\sigma(\psi(Z))), \text{ for } \sigma \in O(V(\IR)), \text{ and }Z\in\IH_n^\pm. 
\]
Then $j(\sigma,Z) \psi(\sigma Z)=\sigma \psi(Z)$.

Let
\[
\mathrm{Gr}(L)=\mathrm{Gr}(V)=\left\{ (Z^+,Z^-)\,:\, Z^+,\, Z^-\subset V, \, Z^+\oplus Z^-=V,\, q|_{Z^+}>0,\, q|_{Z^-}<0\right\}
\]
be the Grassmannian associated to $\IH_n$ that is isomorphic to the orthogonal upper half-plane via
\[     \IH_n\to \mathrm{Gr}(L),\, Z\mapsto (Z^+, Z^-) \coloneqq (\langle \Re(\psi(Z)), \Im(\psi(Z)) \rangle,\langle \Re(\psi(Z)), \Im(\psi(Z)) \rangle^\perp). \]
Let $Z\in\IH_n$. We write $\lambda_{Z^\pm}$ for the projection of $\lambda\in V(\R)$ onto $Z^\pm\subset V(\IR)$.

\subsection{Algebraic cycles}
Let $\nu\in V$ with $q(\nu)<0$. We consider the complex submanifold 
\[
T_\nu=\left\{ Z\in \IH_n\,:\, q(\nu_{Z^+})=0\right\}.
\]
It corresponds to the set of $(Z^+,Z^-)\in\mathrm{Gr}(L)$ with $\nu\in Z^-$ and thus to the Grassmannian of the real quadratic space $\nu^\perp$ of signature $(2,n-1)$ which is isomorphic to $\IH_{n-1}$ and has complex codimension $1$ in $\IH_n$.

The hypersurfaces \(T_\nu\) are the rational quadratic divisors in the
orthogonal upper half-plane. After passing to an arithmetic quotient and
summing over the appropriate lattice cosets, they define algebraic divisors,
usually called Heegner divisors or special divisors; see
\cite[Chapter~5]{brhabil}. In the case of signature \((2,2)\), these rational quadratic divisors are
the classical Hirzebruch--Zagier divisors on Hilbert modular surfaces;
see \cite{hirzebruchzagier,hlr}.

\subsection{Orthogonal modular forms}
By $G=O^+(V(\IR))$ we denote the subgroup of the group $O(V(\IR))$ that acts on the orthogonal upper half-plane $\IH_n$. 
It further acts on a function $h : \IH_n\to\IC$ by the \emph{weight $\kappa\in \IZ$ slash operator} 
\[
    \left(h|_\kappa \sigma\right)(Z)=j(\sigma,Z)^{-\kappa} h(\sigma Z),\, \sigma\in O^+(V(\IR)).
\]
Let $\G\subset O^+(L)$ be the discriminant kernel, i.e. the subgroup that acts trivially on the cosets $L' / L$.
A holomorphic function $h : \IH_n\to\IC$ is called an \emph{orthogonal modular form of weight $\kappa\in\IZ$ with respect to $\G$} if $h|_\kappa \sigma=h$ for all $\sigma\in\G$.
If $n$ is bigger than the Witt rank, orthogonal modular forms are holomorphic at the cusps by the Koecher principle, see \cite[Proposition 4.15]{brhabil}. This is, in particular, satisfied for $n \geq 3$.

\subsection{The functions $\omega$}
Following Kiefer \cite{kiefer}, we define meromorphic orthogonal modular forms for $\nu \in V$ with $q(\nu)<0$ and $\kappa>n$ by
\[
 \omega_{\nu,\kappa}^{\mathrm{mero}} (Z)=\sum_{\gamma \in \G_{\nu}\setminus\G} \frac{1}{(\nu,\psi(Z))^\kappa}\bigg|_\kappa \gamma,
\]
where $Z\in \IH_n$ and $\psi(Z)$ was defined in \eqref{eq:psi}. Here, $\Gamma_\nu$ is the stabilizer of $\nu$ in $\Gamma$. These are orthogonal meromorphic modular forms with singularities along the $\G$-translates of the algebraic cycle $T_\nu$ of order $\kappa$ that vanish at the cusps. 

Their well-definedness follows from
\[
    (\mu, \psi(\gamma Z))=\frac{(\gamma^{-1}\mu,\psi(Z))}{j(\gamma,Z)}=\frac{(\mu,\psi(Z))}{j(\gamma,Z)} \text{ for }\gamma\in\G_\mu.
\]
In \cite{oda} Oda defined cuspidal versions of the functions $ \omega_{\nu,\kappa}^{\mathrm{mero}} (Z)$ which are itself generalisations of Zagier's Hilbert modular forms $\omega_m$, respectively, classical cusp forms $f_{k,D}$. To simplify notation, we usually drop the index $\kappa$, if the weight is clear from the context. For $\beta \in L' / L$ and $m < 0$ we set
$$\omega_{\beta, m}^{\mero}(Z) = \sum_{\nu \in \Gamma \bs L_{\beta, m}} \omega_{\nu}^{\mero}(Z),$$
where $L_{\beta, m} = \{ \lambda \in \beta + L \ \vert\ q(\lambda) = m\}$.

\section{Theta liftings}\label{sec:thetaliftings}
Let $L$ be an even lattice of signature $(2,n)$, $n\in\IN$. Let $\kappa\in\IZ$ be an integer satisfying $\kappa>n$ and let $k=1-n/2+\kappa$.

\subsection{A theta function of Borcherds}
We define the following theta function
\begin{align}\label{def:odatheta}
    \Theta_L(\tau, Z) = v^{n/2} \sum_{\lambda \in L'} \left(\frac{(\lambda, \psi(Z))}{q(Y)}\right)^\kappa \frake_\lambda(q(\lambda_{Z^+}) \tau + q(\lambda_{Z^-}) \overline{\tau}), \, \tau\in\IH,\, Z\in\IH_n.
\end{align}
It follows from \cite[Theorem 4.1]{borcherds} that $\Theta_L(\tau, Z)$ transforms as a modular form of weight $k$ for the Weil representation $\rho_L$ in the variable $\tau$ and its complex conjugate transforms as an orthogonal modular form of weight $\kappa$ in the variable $Z$.

\subsection{The Fourier expansion of the additive Borcherds lift}

Let $k \in\frac12\IZ$ and $k>2$. We define the regularised additive Borcherds lift of a weakly holomorphic modular form $f \in M^{\text{!}}_{k,L}$ by
\begin{align*}
\Phi_L(f,Z) &= \frac{i^\kappa}{2^{\kappa+1}}\int^{\text{reg}}_{\mathcal{F}}\left\langle f(\tau),\Theta_L(\tau,Z)\right\rangle v^{k} d\mu(\tau)
\\
&= \frac{i^\kappa}{2^{\kappa+1}}\lim_{T\to\infty} \int_{\mathcal{F}_T}\left\langle f(\tau),\Theta_L(\tau,Z)\right\rangle v^{k} d\mu(\tau) ,
\end{align*}
where $\mathcal{F}$ is the standard fundamental domain for $\SL_2(\Z)\backslash\H$, $\mathcal{F}_T$ denotes the truncated fundamental domain, and $d\mu(\tau) = \frac{du \, dv}{v^2}$ is the invariant measure on $\H$.

This lift has also been considered by Oda in \cite{oda}.

We recall the statement of \cite[Theorem 14.3]{borcherds} which gives the Fourier expansion of the theta lift $\Phi_L(f,Z)$.

\begin{Theorem}{\cite[Theorem 14.3]{borcherds}}\label{theorem:borcherds}
    Let $f(\tau) = \sum_{\gamma \in L' / L} \sum_{\ell \in \IZ + q(\gamma)} c(\gamma, \ell) \frake_\gamma(\ell \tau)\in M^{\text{!}}_{k,L}$ be a weakly holomorphic modular form of weight $k$ with respect to the Weil representation $\rho_L$ and assume that $\kappa = \frac{n}{2} - 1 + k > 1$. 
    
    Let $N_{u} \in \IN$ be the level of $u$ and choose $\zeta \in L$ with $(u, \zeta) = N_{u}$. Then the Fourier expansion of $\Phi_L(f, Z)$ for sufficiently large $|Y|$, where $Z=X+iY\in K\otimes \R +i\calC$, is given by
    \begin{align*}
        &- \sum_{m, m' = 1}^{N_{u}} N_{u}^{\kappa - 1} c\left(\frac{m u}{N_{u}}, 0\right) e\left(\frac{mm'}{N_{u}}\right) \frac{B_\kappa(m' / N_{u})}{2 \kappa} \\
        &+ \sum_{\substack{\lambda \in K' \\ \lambda \in \overline{\calC}}} \sum_{m' = 1}^{N_{u}}  \sum_{m \mid \lambda} m^{\kappa - 1} c\left(\frac{\lambda}{m} - \frac{(\lambda, \zeta)u}{m N_{u}} + \frac{m' u}{N_{u}}, \frac{q(\lambda)}{m^2}\right) e\left(\frac{mm' - (\lambda, \zeta)}{N_{u}}\right) e(\lambda, Z),
    \end{align*}
    where $\overline{\calC}$ is the closure of the positive cone $\calC$.
\end{Theorem}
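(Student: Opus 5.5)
The plan is to follow Borcherds' method of computing theta lifts by partial Poisson summation along the isotropic vector $u$, then unfolding the regularised integral against $\Gamma_\infty\backslash\SL_2(\IZ)$. \textbf{Step 1: rewrite the theta function.}
\begin{itemize}[leftmargin=*]
\item Split $L'$ according to $u$ and $\zeta$. Write $\lambda=\lambda_K+a\,\tilde u'/N_u+b\,u$, up to the usual correction by $\zeta$.
\item Apply Poisson summation in the coordinate along $u$. This is the content of \cite[Theorem 5.2]{borcherds}, applied to the polynomial $(\lambda,\psi(Z))^\kappa/q(Y)^\kappa$.
\item This polynomial is homogeneous of degree $(\kappa,0)$ in the positive-definite variables. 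Using $(\lambda,\psi(Z))=(\lambda_K,Z)+\dots$, the Poisson-transformed expression becomes
\[
\Theta_L(\tau,Z)=\sum_{c,d\in\IZ}\bigl(\text{weight factor in }c\tau+d\bigr)\cdot\Theta_K\text{-type terms}\cdot e\Bigl(-\tfrac{|c\tau+d|^2}{4iv\,q(Y)}\Bigr)\cdots
\]
Here the $\Theta_K$-type terms are twisted by phases $e\bigl(\tfrac{(\lambda,\zeta)d}{N_u}\bigr)$ and depend on the residue $m'$ modulo $N_u$.
\item The coset bookkeeping, namely which cosets $\frac{\lambda}{m}-\frac{(\lambda,\zeta)u}{mN_u}+\frac{m'u}{N_u}$ occur, comes from tracking how $L'/L$ decomposes relative to $K'/K$, $u/N_u$ and $\zeta$, exactly as in \cite[Section 5]{borcherds}.
\end{itemize}

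\textbf{Step 2: unfold the integral.}
\begin{itemize}[leftmargin=*]
\item The terms with $(c,d)=(0,0)$ contribute nothing. The polynomial has positive degree $\kappa$ in the variable that gets Fourier-transformed, so its Fourier transform vanishes at the origin.
\item The remaining terms are grouped by $n=\gcd(c,d)$ and $(c,d)=n(c',d')$. The sum over coprime $(c',d')$ becomes a sum over $\Gamma_\infty\backslash\SL_2(\IZ)$.
\item This unfolds the integral over $\mathcal{F}$ into an integral over the strip $\{0\le u\le1,\ v>0\}$.
\item Unfolding is legitimate for $|Y|$ sufficiently large. Then the Gaussian factor $\exp(-\pi n^2/(2vq(Y)))$ dominates the at most exponential growth $e^{2\pi|\ell_{\min}|v}$ of the principal part of $f$, so the regularisation can be removed. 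This is where the hypothesis ``sufficiently large $|Y|$'' enters.
\end{itemize}

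\textbf{Step 3: evaluate the strip integral.}
\begin{itemize}[leftmargin=*]
\item Integrating in $u$ over $[0,1]$ picks out, for each $\lambda\in K'$, the coefficient of $f$ at index $q(\lambda)$ in the appropriate coset.
\item The $v$-integral has the shape $\int_0^\infty v^{s}\exp\bigl(-\pi n^2/(2vq(Y))-2\pi v\,q(\lambda_{\dots})\bigr)\,dv/v$.
\item After substituting $\lambda\mapsto\lambda/m$, this becomes a Bessel-type integral. Combined with the exponential in $X$ and the $n$-summation, it collapses to $m^{\kappa-1}e((\lambda,Z))$, with $\lambda$ forced into $\overline{\calC}$.
\item Equivalently, one can avoid Bessel functions by using the Lipschitz formula $\sum_{n}(n+w)^{-\kappa}=\frac{(-2\pi i)^\kappa}{(\kappa-1)!}\sum_{r\ge1}r^{\kappa-1}e(rw)$ after first integrating in $v$. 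I would try this route first, because holomorphy in $Z$ makes it cleaner.
\end{itemize}

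\textbf{Step 4: the constant term.}
\begin{itemize}[leftmargin=*]
\item The terms with $\lambda_K=0$ leave only isotropic vectors $mu/N_u$, which have $q=0$. They contribute $c(mu/N_u,0)$ times a sum $\sum_{n\equiv m'\,(N_u)}n^{-\kappa}$ twisted by $e(mm'/N_u)$.
\item By the Hurwitz zeta value $\zeta(1-\kappa,x)=-B_\kappa(x)/\kappa$, or the functional equation relating these twisted sums, this gives $-N_u^{\kappa-1}\,c\bigl(\tfrac{mu}{N_u},0\bigr)\,e\bigl(\tfrac{mm'}{N_u}\bigr)\,B_\kappa(m'/N_u)/(2\kappa)$.
\end{itemize}

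\textbf{Main obstacle.} I expect the hardest part to be matching all normalising constants and phases exactly:
\begin{itemize}[leftmargin=*]
\item the factor $i^\kappa/2^{\kappa+1}$;
\item the powers of $q(Y)$ cancelling against $|c\tau+d|$;
\item the phases $e\bigl(\tfrac{mm'-(\lambda,\zeta)}{N_u}\bigr)$.
\end{itemize}
I would handle this by first doing the unimodular case $N_u=1$, where $L=K\oplus U$, and then inserting the coset twist.
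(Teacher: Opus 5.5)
Your outline follows the standard route behind the cited result: Borcherds' partial Poisson summation (his Theorem~5.2) followed by the unfolding of his Theorem~7.1. The paper itself gives no proof and only quotes Theorem~14.3. As written, however, your argument has a genuine gap at the $(c,d)=(0,0)$ term.

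\textbf{The $(c,d)=(0,0)$ term.} Your reason for discarding it is false: a polynomial of positive degree times a Gaussian does not have vanishing Fourier transform at the origin. Concretely, for $\lambda=\lambda_K+bu$ you have $(\lambda,\psi(Z))=(\lambda_K,Z)+b$, and the $b$-dependence of the kernel is $\exp(-\pi v((\lambda_K,X)+b)^2/q(Y))$. So the $d=0$ Poisson term is
\[
\int_{\IR}\bigl(x+i(\lambda_K,Y)\bigr)^{\kappa}e^{-\pi v x^{2}/q(Y)}\,dx=\sqrt{q(Y)/v}\,\Bigl(\bigl(i(\lambda_K,Y)\bigr)^{\kappa}+\text{lower-order terms}\Bigr)\neq0 .
\]
Only the parts of the polynomial carrying a positive power of $(\lambda,u_{Z^+})$ die at $(c,d)=(0,0)$. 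What survives is Borcherds' term $\Phi_K(w,p_{w,0,0},F_K)$, up to a power of $q(Y)$. This is the regularised lift of $f$ for the Lorentzian lattice $K$ against the theta function with polynomial $(i(\lambda,Y))^\kappa$, corrected by $\exp(-\Delta/8\pi v)$.

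You must prove that this lift vanishes, and this is where $\kappa>1$ is genuinely used. One way: since the positive part is one-dimensional, the theta function attached to the heat-corrected $x^\kappa$ equals $(-2\pi)^{-j}R^j$ applied to the one attached to $x^{\kappa-2j}$. So for $\kappa\ge 2$ it is a raising-operator image. Its regularised pairing with the holomorphic $f$ then reduces to boundary terms of the regularisation, which still have to be checked to vanish. For $\kappa=1$ the linear polynomial is harmonic and this argument is unavailable.

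\textbf{The convergence argument.} The analytic justification in Step~2 is also wrong, and this affects where the formula is valid.
\begin{itemize}
\item Since $(u_{Z^+},u_{Z^+})=1/(2q(Y))$, the Gaussian is $\exp(-\pi|c\tau+d|^2q(Y)/v)$, not $\exp(-\pi|c\tau+d|^2/(2vq(Y)))$. With your normalisation, large $|Y|$ would weaken it.
\item In either form, a factor in $1/v$ tends to $1$ as $v\to\infty$, so it cannot beat $e^{2\pi|\ell_{\min}|v}$.
\item Its real role in the strip is at $v\to0$, where $f$ grows like $\exp(2\pi|\ell_{\min}|/v)$. This is what forces $q(Y)>2|\ell_{\min}|$, roughly.
\item At $v\to\infty$, the growth of the principal part is removed by the $u$-integration. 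This matches $c(\gamma,q(\lambda))e(q(\lambda)\tau)$ with the $\lambda$-term of the theta function of $K$ and leaves $\exp(-\pi v(\lambda,Y)^2/q(Y))$.
\item That factor is integrable only if $(\lambda,Y)\neq0$, and it depends on $\lambda$ only through $(\lambda,Y)$.
\end{itemize}

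\textbf{Consequence for Step~3.} Nothing in your computation forces $\lambda$ into $\overline{\calC}$. Vectors $\lambda\in K'$ with $q(\lambda)<0$, $(\lambda,Y)>0$ and nonzero principal-part coefficient also contribute terms $e(r(\lambda,Z))$. You can see this already in the $(\nu,u)=0$ part of $\omega^{\mero}_{\beta,m}$ expanded by the Lipschitz formula; these are the divisors $T_\nu$ that run into the cusp. For such $f$, the expansion has to be taken with $Y$ in a Weyl chamber, not merely with $|Y|$ large.
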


The theorem implies divisibility criteria for the Fourier coefficients of the theta lift $\Phi_L(f,Z)$ of weakly holomorphic modular forms $f$.

\begin{Corollary}\label{corollary:divborcherds}
    Let $f(\tau) = \sum_{\gamma \in L' / L} \sum_{\ell \in \IZ + q(\gamma)} c(\gamma, \ell) \frake_\gamma(\ell \tau)$ be a weakly holomorphic modular form of weight $k$ with respect to the Weil representation $\rho_L$. Let $\kappa = \frac{n}{2} - 1 + k > 1$.
    
    \begin{enumerate}
        \item Under the assumption $c(\gamma, \ell) \in \IZ$ for all $\gamma \in L' / L, \ell \in \IZ + q(\gamma)$, the Fourier coefficients of $\Phi_L(f, \cdot)$ are in $\IZ[e(1/N_{u})]$, i.e.\@ they are cyclotomic integers.
        \item  Suppose in addition that $c(\gamma, \ell)$ is divisible by $(N\ell)^{r-1}$ for all $\gamma \in L' / L, \ell \in \IZ + q(\gamma)$, where $N$ is the level of $L$ and $\frac{n}{2} + k - r - 1 \geq 0, r \in \IZ$. Let $\lambda_0 \in K' \cap \overline{\calC}$ be primitive. Then the Fourier coefficient of $\Phi_L(f, \cdot)$ of index $\lambda = \ell \lambda_0$ is divisible by $(\ell N q(\lambda_0))^{r - 1}$.
    \end{enumerate}
\end{Corollary}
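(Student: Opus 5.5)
Both parts follow directly from reading off the Fourier expansion in Theorem \ref{theorem:borcherds}. Every coefficient there is a finite sum of terms of the shape (integer power) $\times$ $c(\cdot,\cdot)$ $\times$ (root of unity of order dividing $N_u$), apart from the constant term, which also carries Bernoulli numbers.

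For part (1), the coefficient of index $\lambda\in K'\cap\overline{\calC}$, $\lambda\neq 0$, is
\[
\sum_{m'=1}^{N_u}\sum_{m\mid\lambda} m^{\kappa-1}\, c\Bigl(\tfrac{\lambda}{m}-\tfrac{(\lambda,\zeta)u}{mN_u}+\tfrac{m'u}{N_u},\tfrac{q(\lambda)}{m^2}\Bigr)\, e\Bigl(\tfrac{mm'-(\lambda,\zeta)}{N_u}\Bigr).
\]
Since $\kappa\geq 2$, each $m^{\kappa-1}\in\IZ$, the $c$-values are integers by assumption, and the exponentials lie in $\IZ[e(1/N_u)]$. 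Hence the coefficient lies in $\IZ[e(1/N_u)]$.

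The constant term requires a separate check. It is $-\sum_{m,m'} N_u^{\kappa-1}c(mu/N_u,0)\,e(mm'/N_u)\,B_\kappa(m'/N_u)/(2\kappa)$. I would use the distribution relation $\sum_{m'=1}^{N_u} e(mm'/N_u)B_\kappa(m'/N_u)$ together with the integrality of $N_u^{\kappa-1}B_\kappa(x)$-type expressions. I expect this to give at worst a denominator dividing $2\kappa$ times the denominator of a Bernoulli number. This is the main obstacle: integrality of the zeroth coefficient is not automatic. I would either restrict the claim to $\lambda\neq0$, or argue that $c(\gamma,0)=0$ whenever the forms considered arise as images of the Bol operator (which have vanishing constant terms).

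For part (2), fix $\lambda=\ell\lambda_0$ with $\lambda_0$ primitive. Then $m\mid\lambda$ means exactly $m\mid\ell$, and the second argument of $c$ is
\[
\frac{q(\lambda)}{m^2}=\Bigl(\frac{\ell}{m}\Bigr)^2 q(\lambda_0).
\]
By hypothesis the corresponding $c$-value is divisible by
\[
\Bigl(N\bigl(\tfrac{\ell}{m}\bigr)^2q(\lambda_0)\Bigr)^{r-1}=\Bigl(\tfrac{\ell}{m}\Bigr)^{r-1}\Bigl(\tfrac{\ell}{m}\Bigr)^{r-1}\bigl(Nq(\lambda_0)\bigr)^{r-1}.
\]
This divisibility is taken in the ring generated by these rationals. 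I would note that $Nq(\lambda_0)$ is an integer, since $N$ is the level and $\lambda_0\in K'\subset L'$ gives $q(\lambda_0)\in\frac1N\IZ$.

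Multiplying by $m^{\kappa-1}$ and using $r-1\leq\kappa-1$ (this is the hypothesis $\frac n2+k-r-1\geq0$, i.e.\ $\kappa\geq r$), each term $m^{\kappa-1}c(\dots)$ is divisible by
\[
m^{r-1}\Bigl(\tfrac{\ell}{m}\Bigr)^{r-1}\bigl(Nq(\lambda_0)\bigr)^{r-1}=\bigl(\ell Nq(\lambda_0)\bigr)^{r-1}.
\]
The extra factor $(\ell/m)^{r-1}$ we discarded is an integer. Summing against the roots of unity preserves divisibility in $\IZ[e(1/N_u)]$, which gives the claim.

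The delicate points are bookkeeping. First, the hypothesis "$(N\ell)^{r-1}\mid c(\gamma,\ell)$" must be read for the possibly non-integral $\ell\in\IZ+q(\gamma)$, i.e.\ as $N\ell\in\IZ$; I would adopt exactly this reading. Second, the case $\lambda$ isotropic, $q(\lambda_0)=0$, must be handled: there the statement is vacuous, or reduces to $c(\cdot,0)$, which must be interpreted as divisible by $0^{r-1}$ and hence vanishing when $r>1$. This is consistent with Corollary \ref{corollary:divisibility}.
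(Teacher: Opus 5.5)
Your proposal is correct and is essentially the paper's proof. Part (2) uses the same factorization $m^{\kappa-1}\bigl(Nq(\lambda_0)\ell^2/m^2\bigr)^{r-1}=\bigl(\ell Nq(\lambda_0)\bigr)^{r-1}m^{\kappa-r}(\ell/m)^{r-1}$ with $\kappa-r=\frac n2+k-r-1\geq 0$, and part (1) is read off Theorem~\ref{theorem:borcherds}.

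Your caution about the constant term is justified, and you are more careful here than the paper, whose proof just says (1) ``follows directly''. For $N_u=1$ that term is $-c(0,0)B_\kappa/(2\kappa)$, which is never integral for even $\kappa$ when $c(0,0)=1$. For example, $f=E_4$ on $\mathrm{II}_{2,10}$ has $\kappa=8$ and constant term $1/480$. So (1) must be restricted to $\lambda\neq0$. This is harmless for the application, since forms $N^{k-1}D^{k-1}g$ have $c(\gamma,0)=0$.

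A minor point: $K'\subset L'$ holds under the convention $u'\in L$ of Section~\ref{sec:orthforms}, but not in general. For general $N_u$, use that $\lambda_0-\frac{(\lambda_0,\zeta)}{N_u}u\in L'$ has the same norm as $\lambda_0$; this gives $Nq(\lambda_0)\in\IZ$.
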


\begin{proof}
    The claim in (1) follows directly from the Fourier expansion of $\Phi_L(f, \cdot)$. For (2) we use that $\kappa = \frac{n}{2} - 1 + k$, so that the divisor sum is given by
    \begin{align}
        \sum_{m \mid \lambda} m^{\frac{n}{2} + k - 2} c\left(\frac{\lambda}{m} - \frac{(\lambda, \zeta)u}{m N_{u}} + \frac{m' u}{N_{u}}, \frac{q(\lambda)}{m^2}\right). \label{eq:SummandFormula}
    \end{align}
    Now, the Fourier coefficients $c(*, \frac{q(\ell \lambda_0)}{m^2})$ are divisible by $\left(N q(\lambda_0) \frac{\ell^2}{m^2}\right)^{r-1}$. Hence, the divisor sum in \eqref{eq:SummandFormula} for $\lambda = \ell \lambda_0$ is given by
    $$\sum_{m \mid \ell} m^{\frac{n}{2} + k - 2} \left(N q(\lambda_0) \frac{\ell^2}{m^2}\right)^{r-1} c_m = \left(N q(\lambda_0) \ell \right)^{r-1} \sum_{m \mid \ell} m^{\frac{n}{2} + k - r - 1} \left(\frac{\ell}{m}\right)^{r - 1} c_m$$
    for some integers $c_m$ (depending on $\lambda, \ell$ and $m$). Since $m \mid \ell$ and $\frac{n}{2} + k - r - 1 \geq 0$ is integral, we obtain that every summand in the Fourier coefficient is divisible by $(N q(\lambda) \ell)^{r - 1}$.
\end{proof}

\subsection{Borcherds' additive regularised theta lift of weakly holomorphic Poincar\'{e} series}

We compute the lift of weakly holomorphic Poincar\'{e} series.

\begin{Theorem}\label{theorem:liftpc}
Let $k \in\frac12\IZ, k>2$ and $m<0$. Set $\kappa = \frac{n}{2} - 1 + k$
    The regularised additive Borcherds lift of the weakly holomorphic Poincar\'{e} series $\calF_{\beta,m,k}(\tau)$ is given by
    \[
     \Phi_L(\calF_{\beta,m,k},Z) = (-2\pi i)^{-\kappa}\frac{(\kappa-1)!}{2(k-1)!} 
     \omega_{\beta, m}^{\mero}(Z).
     \]
\end{Theorem}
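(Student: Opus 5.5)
The plan is the standard unfolding argument. Since $m<0$ and $k>2$, the series defining $\calF_{\beta,m,k}(\tau,s)$ converges absolutely at $s=k/2$, where $\calM_{k,k/2}(4\pi m v)e(mu)=e(m\tau)$ up to the sign conventions for $\sgn(v)$. Hence $\calF_{\beta,m,k}(\tau)=\frac{1}{2\Gamma(k)}\sum_{(M,\phi)\in\tilde\Gamma_\infty\bs\Mp_2(\Z)}(e(m\tau)\frake_\beta)|_{k,L}(M,\phi)$. I will insert this into the regularised integral defining $\Phi_L$ and unfold it against the fundamental domain $\mathcal F$. Because $\langle f,\Theta_L\rangle v^k$ is $\Mp_2(\Z)$-invariant, the integral over $\mathcal F_T$ becomes an integral over the strip $\{0\le u\le 1, v>0\}$; the factor $2$ from $\pm 1$ in $\tilde\Gamma_\infty$ is absorbed by the $\frac12$. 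This gives
\[
\Phi_L(\calF_{\beta,m,k},Z)=\frac{i^\kappa}{2^{\kappa+1}}\cdot\frac{1}{\Gamma(k)}\int_0^\infty\int_0^1 e(m\tau)\,\overline{\Theta_{L,\beta}(\tau,Z)}\,v^{k-2}\,du\,dv .
\]
To justify this I would work first at $\Re(s)$ large, or use that for $m<0$ the unfolded integrand decays. The point $Z$ must stay off the singular cycles $T_\nu$ with $q(\nu)=m$, and there the $v$-integral converges absolutely at both ends. Then I check that the limit $T\to\infty$ commutes with unfolding, which is the usual argument in \cite{borcherds} or \cite{brhabil}.

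Next I evaluate the strip integral termwise. I will write $\overline{\Theta_{L,\beta}}=v^{n/2}\sum_{\lambda\in\beta+L}\overline{(\lambda,\psi(Z))}^{\kappa}q(Y)^{-\kappa}\,e(-q(\lambda_{Z^+})\bar\tau-q(\lambda_{Z^-})\tau)$. The $u$-integral picks out the $\lambda$ with $q(\lambda)=q(\lambda_{Z^+})+q(\lambda_{Z^-})=m$, which is exactly the set $L_{\beta,m}$. The remaining $v$-integral is
\[
\int_0^\infty v^{k+n/2-2}\exp\bigl(-4\pi v\,q(\lambda_{Z^+})\bigr)\,dv=\Gamma(\kappa)\,(4\pi q(\lambda_{Z^+}))^{-\kappa},
\]
using $k+n/2-1=\kappa$ and $|m|-q(\lambda_{Z^-})=q(\lambda_{Z^+})$ after combining the exponents $2\pi i m\tau$ and $-2\pi i(\ldots)$ carefully; I will double-check this sign bookkeeping.

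Then I use the identity $q(\lambda_{Z^+})=\frac{|(\lambda,\psi(Z))|^2}{2\,(\psi(Z),\overline{\psi(Z)})}$ together with $(\psi(Z),\overline{\psi(Z)})=4q(Y)$ (or $2q(Y)$, depending on normalisation). With it the factor $\overline{(\lambda,\psi(Z))}^\kappa q(Y)^{-\kappa}/q(\lambda_{Z^+})^\kappa$ collapses to a constant times $(\lambda,\psi(Z))^{-\kappa}$. This is the key simplification, and the powers of $2$ should cancel the $2^{\kappa+1}$ and the $4^{-\kappa}$. Summing over $\lambda\in L_{\beta,m}$ gives $\sum_{\lambda\in L_{\beta,m}}(\lambda,\psi(Z))^{-\kappa}$. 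This equals $\omega^{\mero}_{\beta,m}(Z)$ after regrouping it as $\sum_{\nu\in\Gamma\bs L_{\beta,m}}\sum_{\gamma\in\Gamma_\nu\bs\Gamma}$ via $(\gamma^{-1}\nu,\psi(Z))=(\nu,\psi(\gamma Z))j(\gamma,Z)$, so that $(\gamma^{-1}\nu,\psi(Z))^{-\kappa}=(\nu,\psi(Z))^{-\kappa}|_\kappa\gamma$. Collecting constants, $i^\kappa(2\pi)^{-\kappa}$ becomes $(-2\pi i)^{-\kappa}$ up to sign, and $\Gamma(\kappa)/\Gamma(k)=(\kappa-1)!/(k-1)!$, with the remaining $\frac12$ coming from the Poincaré normalisation.

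The main obstacle is the analytic justification of unfolding the regularised integral and the interchange of sum and integral near the singular divisors, together with exact constant tracking. For the analysis I would first prove the identity for $Z$ in a region of large $q(Y)$ where everything converges absolutely, and then extend by meromorphic continuation in $Z$. For the constants I would verify them carefully against the normalisation of $\Theta_L$ in \eqref{def:odatheta}.
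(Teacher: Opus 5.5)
Your proposal is correct and follows the paper's proof. You unfold against $\mathcal F$, let the $u$-integral select $\lambda\in L_{\beta,m}$, evaluate the $v$-integral, and collapse the summand to a constant times $(\lambda,\psi(Z))^{-\kappa}$ via the formula for $q(\lambda_{Z^+})$. The only difference is that you use $\calM_{k,k/2}(4\pi m v)e(mu)=e(m\tau)$ to get an elementary Gamma integral, whereas the paper evaluates a tabulated Laplace transform of $M_{-k/2,(k-1)/2}$ and then simplifies ${}_2F_1(\kappa,k;k;z)=(1-z)^{-\kappa}$; these are the same computation, since $M_{-k/2,(k-1)/2}(x)=x^{k/2}e^{x/2}$.

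When you fix the constants, four corrections to your bookkeeping:
\begin{itemize}
\item The right relation is $m-q(\lambda_{Z^-})=q(\lambda_{Z^+})$, not $|m|-q(\lambda_{Z^-})$. Your value $\Gamma(\kappa)(4\pi q(\lambda_{Z^+}))^{-\kappa}$ for the $v$-integral is nonetheless correct.
\item The correct formula is $q(\lambda_{Z^+})=|(\lambda,\psi(Z))|^2/(\psi(Z),\overline{\psi(Z)})=|(\lambda,\psi(Z))|^2/(4q(Y))$. Your version has an extra factor $2$ in the denominator, which would put the result off by $2^{\kappa}$.
\item The final $\frac12$ comes from the $2^{-(\kappa+1)}$ in the definition of $\Phi_L$, not from the Poincar\'e normalisation; that factor was already absorbed in the unfolding.
\item The identity $i^\kappa(2\pi)^{-\kappa}=(-2\pi i)^{-\kappa}$ holds exactly, not merely up to sign.
\end{itemize}
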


\begin{proof}
   We compute the following integral
\[
\frac{i^\kappa}{2^{\kappa+1}}
\frac{1}{2\G(k)}\int_{\mathcal{F}}^{\reg} \!\!\!\!\!\sum_{(M,\phi)\in \tilde{\Gamma}_\infty\setminus \Mp_2(\Z)} \left\langle  \left[\calM_{k,k/2}(4\pi mv)e(mu)\mathfrak{e}_\beta\right]|_{k,L} (M,\phi),{\Theta_{L}(\tau,Z)}\right\rangle v^{k}d\mu(\tau)
\]
following the strategy of the proof of \cite[Theorem~2.14]{brhabil}. We first apply the unfolding trick and insert the Fourier expansion of the theta function to get
\begin{align*}
C_{k,n}
\frac{1}{q(Y)^\kappa}\int_{v=0}^\infty \int_{u=0}^1 \sum_{\lambda\in L+\beta} v^{\frac{n}{2}+k-2} \calM_{k,k/2}(4\pi m v)e(m u) \overline{\left(\lambda,\psi(Z)\right)^{\kappa}} e(-q(\lambda)u)e^{-2\pi v q_Z(\lambda)} dudv,
\end{align*}
where $C_{k,n}=\frac{i^\kappa}{2^{\kappa+1}\G(k)}$ and $q_Z(\lambda) = q(\lambda_{Z^+}) - q(\lambda_{Z^-})$. The integral over $u$ vanishes unless $m=q(\lambda)$, in which case it equals $1$. We plug in the definition of $\calM_{k,k/2}$ and note that the resulting integral
\begin{align*}
 \int_{v=0}^\infty v^{\frac{n+k}{2}-2} M_{-k/2,k/2-1/2}(4\pi |m|v)e^{-2\pi v q_Z(\lambda)}dv
\end{align*}
is a Laplace transform that equals
\begin{equation}\label{eq:liftpc}
    (4\pi |m|)^{\frac{k}{2}} \G(\kappa) \left(2\pi (q_Z(\lambda)+|m|)\right)^{-\kappa} 
    {}_2F_1\left(\kappa,k,k;\frac{2|m|}{q_Z(\lambda)+|m|}\right)
\end{equation}
by equation (11) on p.\@ 215 of \cite{laplacetransform}. Note that ${}_2F_1(a,b,b;z)=(1-z)^{-a}$ by equation (15.4.6) of \cite{nist}. Therefore, the expression in \eqref{eq:liftpc} simplifies to
\[
(4\pi |m|)^{\frac{k}{2}} \G(\kappa) \left(2\pi(q_Z(\lambda)+m)\right)^{-\kappa},
\]
where we used that $m<0$. 
 Noting that $q_Z(\lambda)+m=2q(\lambda_{Z^+})$ we obtain
\[
 \Phi_L(\calF_{\beta,m,k},Z) = \frac12(-2\pi i )^{-\kappa} 4^{-\kappa}  \frac{(\kappa-1)!}{(k-1)!}\frac{1}{q(Y)^\kappa}
 \sum_{\substack{\lambda \in L+\beta\\q(\lambda)=m}} \overline{(\lambda,\psi(Z))}^\kappa q(\lambda_{Z^+})^{-\kappa}.
 \]
We use
\[
q(\lambda_{Z^+}) = \frac{\lvert (\lambda, \psi(Z)) \rvert^2}{4 q(Y)}
\]
to obtain the statement in the theorem.
\end{proof}

\section{Proof of Theorem \ref{thm:mainintro}}\label{sec:proof}

By Proposition \ref{prop:mcgraw} the space of weakly holomorphic modular forms of weight $2-k$ for $\rho_L$ has a basis with integral Fourier coefficients.
These basis elements can be written as linear combinations of Poincar\'{e} series $\calF_{\beta,m,2-k}$ with $m<0$ as these Poincar\'{e} series provide a basis of the space of weakly holomorphic modular forms of weight $2-k$ for $\rho_L$. 

By Lemma \ref{lemma:bol} we have for 
\[
f=\sum_{\beta \in L'/L} \sum_{n<0} c_f(n,\beta) \calF_{\beta,m,2-k}(\tau)\in M^{!}_{2-k,L}
\]
that 
\[
    D^{k-1} f= \sum_{\beta \in L'/L} \sum_{n<0} c_f(n,\beta) n^{k-1} \calF_{\beta,m,k}(\tau).
\]
If $f$ has integral Fourier coefficients we see by Corollary \ref{corollary:divisibility} that the Fourier coefficients of $N^{k-1}D^{k-1} f$ of index $(\ell,\beta)$ are divisible by $(N\ell)^{k-1}$. 
By Theorem \ref{theorem:borcherds} and Corollary \ref{corollary:divborcherds} the Fourier coefficients of the lifts $\Phi_L(N^{k-1} D^{k-1} f,Z)$ satisfy the desired divisibility property.

By Theorem \ref{theorem:liftpc} the lifts of $D^{k-1} \calF_{\beta,m,2-k}$ are given by (multiples of) the functions $\omega_m^{\mero}$. 
This implies that the lift of a weakly holomorphic modular form $f\in M^{!}_{k,L}$ as above is given by the corresponding linear combination of the functions $\omega_{\beta, m}^{\mero}$. 

If the dual space of cusp forms $S_{k,L^-}$ is empty, then $H_{2-k,L}=M^{\text{!}}_{2-k,L}$ and we obtain the statement in Theorem \ref{thm:mainintro}.


\bibliographystyle{acm}
\bibliography{bib.bib}

\end{document}